\theoremstyle{definition}
\newtheorem{definition}{Definition}[section]
\newtheorem{remark}[definition]{Remark}
\theoremstyle{plain}
\newtheorem{lemma}[definition]{Lemma}
\newtheorem{proposition}[definition]{Proposition}
\newtheorem{theorem}[definition]{Theorem}
\newtheorem{corollary}[definition]{Corollary}
\newtheorem{conjecture}[definition]{Conjecture}
\newtheorem{problem}{Open Problem}
\newcommand{\s}{\mathfrak{sl}(2,\mathbb{C})}
\begin{document}

\title{The simple non-Lie Malcev algebra as a Lie-Yamaguti algebra}

\author{Murray R. Bremner}

\address{Department of Mathematics and Statistics, University of Saskatchewan, Canada}

\email{bremner@math.usask.ca}

\author{Andrew Douglas}

\address{Department of Mathematics, New York City College of Technology,
City University of New York, USA}

\email{adouglas@citytech.cuny.edu}

\begin{abstract}
The simple 7-dimensional Malcev algebra $M$ is isomorphic to the irreducible $\s$-module $V(6)$ with binary product 
$[x,y] = \alpha( x \wedge y)$ defined by the $\s$-module morphism $\alpha\colon \Lambda^2 V(6) \to V(6)$.
Combining this with the ternary product $(x,y,z) = \beta(x \wedge y) \cdot z$ defined by the $\s$-module morphism 
$\beta\colon \Lambda^2 V(6) \to V(2) \approx \s$ gives $M$ the structure of a generalized Lie triple system, or
Lie-Yamaguti algebra.  We use computer algebra to determine the polynomial identities of low degree satisfied 
by this binary-ternary structure.
\end{abstract}  

\maketitle


\section{Introduction}

Moufang-Lie algebras were introduced by Malcev \cite{Malcev} in 1955 as the tangent algebras 
of analytic Moufang loops.
These structures were given the name Malcev algebras by Sagle \cite{Sagle} in 1961; the basic 
theory was completed by Kuzmin \cite{Kuzmin} in 1968.
Malcev algebras are related to alternative algebras in the same way that Lie algebras are 
related to associative algebras: any subspace of an associative (resp.~alternative) 
algebra closed under the commutator $[x,y] = xy - yx$ is a Lie (resp.~Malcev) 
algebra.
An important recent development is the construction by P\'erez-Izquierdo 
and Shestakov \cite{PerezShestakov} of universal nonassociative enveloping algebras for Malcev 
algebras which share many of the properties of universal associative enveloping algebras 
of Lie algebras; for a survey, see Bremner et al.~\cite{BremnerSurvey}.

Lie triple systems were introduced by Jacobson \cite{Jacobson} in 1949; the basic theory was 
completed by Lister \cite{Lister} in 1952.
A simultaneous generalization of Lie algebras and Lie triple systems was introduced by Yamaguti 
\cite{Yamaguti} in 1958.  
These binary-ternary structures were called Lie-Yamaguti algebras by Kinyon and Weinstein \cite{KinyonWeinstein}
in 2001; the basic theory has recently been completed by Benito et al.~\cite{Benito1,Benito2}.

In this paper we consider polynomial identities for the 7-dimensional simple Malcev algebra
regarded as a Lie-Yamaguti algebra: we consider both the usual binary product and an
unusual ternary product.  To define this structure we use the representation
theory of the simple Lie algebra $\s$.  We start with the irreducible 7-dimensional $\s$-module $V(6)$
of highest weight 6, 
and consider the decomposition of its exterior square into irreducible summands:
  \[
  \Lambda^2 V(6) \approx V(10) \oplus V(6) \oplus V(2).
  \]
Up to nonzero scalar multiples, there are unique $\s$-module morphisms: 
  \[ 
  \alpha\colon \Lambda^2 V(6) \to V(6),
  \qquad
  \beta\colon \Lambda^2 V(6) \to V(2).
  \] 
If we define a bilinear operation on $V(6)$ by the equation $[x,y] = \alpha( x \wedge y)$, 
then we obtain a structure isomorphic to the 7-dimensional simple Malcev algebra.
If we also define a trilinear operation on $V(6)$ by the equation $(x,y,z) = \beta( x \wedge y ) \cdot z$,
where the dot denotes the action of $\s \approx V(2)$,
then combining these operations gives $V(6)$ the structure of a Lie-Yamaguti algebra.

Our goal in this paper is to use computer algebra to determine the polynomial identities of low degree
satisfied by the operations $[x,y]$ and $(x,y,z)$, independently and together.
Our main results are as follows:
  \begin{enumerate}
  \item
For the bilinear operation $[x,y]$, apart from the obvious anticommutativity, we recover a multilinear 
form of the Malcev identity in degree 4, an identity equivalent to Filippov's $h$-polynomial in degree 5, 
and we verify that every identity of degree $\le 7$ is a consequence of the identities of degree $\le 5$.
  \item
For the trilinear operation $(x,y,z)$, apart from the obvious skew-symmetry in the first two arguments,
we recover the ternary derivation identity in degree 5, and discover a 357-dimensional space of identities
of degree 7 which are not consequences of the identities of lower degree; we present an
explicit 141-term identity of degree 7.
  \item
For both operations together, in addition to the identities already mentioned, we recover the defining
identities in degrees 3 and 4 for Lie-Yamaguti algebras, together with a new 31-term identity of degree 5.
  \end{enumerate}
Section \ref{preliminaries} recalls the definitions of Malcev and Lie-Yamaguti algebras, summarizes the
necessary background on representations of $\s$, and presents explicit constructions 
of the binary and ternary products on $V(6)$.
Section \ref{computational} recalls some basic concepts in the combinatorics of free nonassociative algebras,
and describes our computational methods for studying polynomial identities.
Sections \ref{sectionbinary}, \ref{sectionternary}, and \ref{sectionmixed} present our original results for 
the bilinear operation $[x,y]$, the trilinear operation $(x,y,z)$, 
and the two operations together.


\section{Preliminaries} \label{preliminaries}

\subsection{Malcev algebras and Lie-Yamaguti algebras}

We recall the definitions of our main objects of study.
We assume the base field has characteristic $\ne 2$.

\begin{definition} \label{definitionMalcev}
(Malcev \cite{Malcev}, Sagle \cite{Sagle}, Kuzmin \cite{Kuzmin})
A \textbf{Malcev algebra} is a vector space with a bilinear product $[a,b]$ 
satisfying these multilinear identities:
  \allowdisplaybreaks
  \begin{align*}
  &
  [a,b] + [b,a] 
  \equiv 
  0,
  \\
  &
  [[[a,c],b],d]+[[[d,a],c],b]+[[[c,b],d],a]+[[[b,d],a],c]-[[a,b],[c,d]]
  \equiv
  0.
  \end{align*}
\end{definition}

Every Lie algebra is a Malcev algebra.  
Over an algebraically closed field of characteristic 0, there is a unique simple non-Lie Malcev
algebra, isomorphic to the commutator algebra of the 7-dimensional space of octonions
with zero real part.

\begin{definition} \label{definitionLY}
(Yamaguti \cite{Yamaguti}, Kinyon and Weinstein \cite{KinyonWeinstein}, Benito et al.~\cite{Benito1,Benito2})
A \textbf{Lie-Yamaguti algebra} is a vector space with bilinear and
trilinear products $[a,b]$ and $(a,b,c)$ satisfying these multilinear identities:
  \allowdisplaybreaks
  \begin{align*}
  &
  [a,b] + [b,a]
  \equiv 0,
  \\
  &
  (a,b,c) + (b,a,c)
  \equiv 0,
  \\
  &
  [[a,b],c] + [[b,c],a] + [[c,a],b] + (a,b,c) + (b,c,a) + (c,a,b)
  \equiv 0,
  \\
  &
  ( [a,b], c, d ) + ( [b,c], a, d ) + ( [c,a], b, d )
  \equiv 0,
  \\
  &
  [ (a,b,c), d ] - [ (a,b,d), c ] - (a,b,[c,d])
  \equiv 0,
  \\
  &
  ((a,b,c),d,e) - ((a,b,d),c,e) - (a,b,(c,d,e)) + (c,d,(a,b,e))
  \equiv 0.
  \end{align*}
\end{definition}

Every Lie algebra is a Lie-Yamaguti algebra if we define $(a,b,c) \equiv 0$.
Every Lie triple system is a Lie-Yamaguti algebra if we define $[a,b] \equiv 0$.
These structures arise naturally as follows; see Kinyon and Weinstein \cite{KinyonWeinstein}:   
Let $\mathfrak{g}$ be a reductive Lie algebra with decomposition 
$\mathfrak{g} = \mathfrak{h}\oplus \mathfrak{m}$ such that 
$[ \mathfrak{h}, \mathfrak{m} ] \subseteq \mathfrak{m}$ and  
$[ \mathfrak{h}, \mathfrak{h} ] \subseteq \mathfrak{h}$.   
Let $\pi_{\mathfrak{h}}\colon \mathfrak{g} \longrightarrow \mathfrak{h}$ and 
$\pi_{\mathfrak{m}}\colon \mathfrak{g} \longrightarrow \mathfrak{m}$ 
be the projections onto $\mathfrak{h}$ and $\mathfrak{m}$.  
The subspace $\mathfrak{m}$ is a Lie-Yamaguti algebra with respect to the following operations:
  \[ 
  [a,b] = \pi_{\mathfrak{m}}( [a,b] ),
  \qquad
  (a,b,c) = [ \, \pi_{\mathfrak{h}}( [a,b] ), \, c \, ].
  \]

\subsection{Irreducible representations of $\s$}

The 3-dimensional simple Lie algebra $\s$ consists of the $2 \times 2$ complex matrices of trace zero with 
respect to the commutator $[A,B] = AB - BA$.  The standard basis of $\s$ and the commutation relations among basis
elements are as follows:
  \begin{align*}
  &
  H =
  \left[ \begin{array}{rr}
  1 &\!\!\!  0 \\
  0 &\!\!\! -1 
  \end{array} \right], 
  \quad
  E = 
  \left[ \begin{array}{rr}
  0 & 1 \\
  0 & 0
  \end{array} \right], 
  \quad
  F = 
  \left[ \begin{array}{rr}
  0 & 0 \\
  1 & 0
  \end{array} \right], 
  \quad 
  \begin{array}{l}
  {[H,E] = 2E}, \\
  {[H,F] = -2F}, \\
  {[E,F] = H}. 
  \end{array} 
  \end{align*}
We recall the classification of irreducible finite-dimensional $\s$-modules; 
see Erdmann and Wildon \cite{ErdmannWildon} or Humphreys \cite{Humphreys}.  
Any such module $V$ decomposes into weight spaces according to the action of $H$:
  \[
  V = \bigoplus_{\lambda \in \mathbb{C}} V_\lambda, 
  \qquad
  V_{\lambda} = \{ \, v \in V \mid H \cdot v = \lambda v \, \}.
  \]
The commutation relations for $\s$ imply
$F \cdot V_\lambda \subseteq V_{\lambda-2}$
and
$E \cdot V_\lambda  \subseteq V_{\lambda+2}$.
Since $V$ is finite-dimensional, there exists $V_\lambda \ne \{0\}$ such that $V_{\lambda+2} = \{0\}$.  
Choose $v_0 \in V_\lambda$, $v_0 \ne 0$; then $E \cdot v_0 = 0$.     
Define weight vectors ($H$-eigenvectors) as follows:
  \begin{equation}
  \label{V(n)basis}
  v_i = \frac{1}{i!} F^i \cdot v_0 \quad (i \ge 0).
  \end{equation}
For $i \ge 0$ we have
  \begin{equation}
  \label{V(n)}
  H \cdot v_{i} = (\lambda-2i) \, v_{i}, 
  \quad
  E \cdot v_{i} = (\lambda-i+1) \, v_{i-1},
  \quad
  F \cdot v_{i} = (i+1) \, v_{i+1}.
  \end{equation}
The action of $H$ implies that the nonzero $v_i$ are linearly independent.   
Since $V$ is finite-dimensional, there exists $n \ge 0$ such that $v_n \ne 0$ and $v_m = 0$ for $m > n$.
Setting $i = n+1$ in the action of $E$ gives $0 = (\lambda-n) v_n$; hence $\lambda = n \in \mathbb{Z}$. 
The submodule with basis $\{ v_0, v_1, \dots, v_n \}$ must equal $V$ since $V$ is irreducible.

\begin{theorem}
For every $n \ge 0$, there exists an irreducible $\s$-module $V(n)$ of dimension $n+1$, 
generated by a highest weight vector $v_0$ with $H \cdot v_0 = n v_0$, and having a basis $\{ v_0, v_1, \dots, v_n \}$ 
satisfying equations \eqref{V(n)} with $\lambda = n$.
Conversely, any irreducible finite-dimensional $\s$-module is isomorphic to $V(n)$ for some $n$.
\end{theorem}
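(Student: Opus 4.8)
The two assertions can be handled separately. The converse has essentially been established by the discussion preceding the theorem statement: given an irreducible finite-dimensional $\s$-module $V$, that discussion produces a highest weight vector $v_0$, the derived vectors $v_i=\frac1{i!}F^i\cdot v_0$, an integer $n\ge 0$ with $v_n\ne 0$ and $v_{n+1}=0$, and the conclusion that $\{v_0,\dots,v_n\}$ is a basis of $V$ on which $H,E,F$ act by the formulas \eqref{V(n)} with $\lambda=n$. Once existence is in hand, the linear isomorphism carrying the standard basis of $V(n)$ to $v_0,\dots,v_n$ intertwines the two actions, so $V\cong V(n)$, and $n$ is pinned down by $n+1=\dim V$. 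Thus the real content is the existence half, and within that the single genuine computation is the check that the prescribed operators satisfy the defining relations of $\s$.

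For existence, fix $n\ge 0$, let $V(n)$ be the complex vector space with basis $v_0,\dots,v_n$, adopt the convention $v_{-1}=v_{n+1}=0$, and define linear operators $H,E,F$ on $V(n)$ by equations \eqref{V(n)} with $\lambda=n$. The main step is to verify the commutation relations $[H,E]=2E$, $[H,F]=-2F$, $[E,F]=H$ on each basis vector: since $H$ is diagonal the first two are immediate, while the third follows from $EF\cdot v_i=(i+1)(n-i)v_i$, $FE\cdot v_i=i(n-i+1)v_i$, and the scalar identity $(i+1)(n-i)-i(n-i+1)=n-2i$. This exhibits $V(n)$ as an $\s$-module of dimension $n+1$ in which $v_0$ is a highest weight vector of weight $n$; since $v_i=\frac1{i!}F^i\cdot v_0$, it is generated by $v_0$.

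For irreducibility, let $W\subseteq V(n)$ be a nonzero submodule. Since $W$ is $H$-invariant and $H$ has the $n+1$ distinct eigenvalues $n,n-2,\dots,-n$ with one-dimensional eigenspaces $\mathbb{C}v_i$, the subspace $W$ is spanned by those $v_i$ that lie in it; let $i$ be the least index occurring. Applying $E$ repeatedly, and using that the coefficients $n-j+1$ in $E\cdot v_j=(n-j+1)v_{j-1}$ are nonzero for $1\le j\le n$, we obtain $v_0\in W$; applying $F$ repeatedly then gives every $v_j\in W$, so $W=V(n)$. I anticipate no serious obstacle: the argument is bookkeeping with the three recursions in \eqref{V(n)}, the one unavoidable computation being the $[E,F]=H$ identity above.
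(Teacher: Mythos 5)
Your proposal is correct and follows essentially the same route as the paper: the converse is exactly the weight-space/highest-weight-vector argument developed in the discussion preceding the theorem, and the existence half is the standard direct construction (define the operators by \eqref{V(n)} with $\lambda=n$, check $[H,E]=2E$, $[H,F]=-2F$, $[E,F]=H$ via $(i+1)(n-i)-i(n-i+1)=n-2i$, and prove irreducibility from the one-dimensional $H$-eigenspaces) that the paper delegates to the cited references. Your explicit verification of the commutation relations and of irreducibility simply fills in details the paper leaves to Erdmann--Wildon and Humphreys; no gaps.
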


It will be convenient to index the basis vectors for $V(n)$ by their $H$-eigenvalues; we obtain
the basis $\{ v_n, v_{n-2}, \dots, v_{-n+2}, v_{-n} \}$ with the following action of $H, E, F$:
  \begin{align}
  H \cdot v_{n-2i} &= (n-2i) \, v_{n-2i}, 
  \notag
  \\
  E \cdot v_{n-2i} &= (n-i+1) \, v_{n-2(i-1)},
  \label{V(n)weight}
  \\ 
  F \cdot v_{n-2i} &= (i+1) \, v_{n-2(i+1)}.
  \notag
  \end{align}
For $n = 2$ we have the adjoint module $V(2) \approx \s$.  However, we must be careful because 
equations \eqref{V(n)weight} give the following action of $\s$ on $V(2)$:
  \[
  \begin{array}{lll}
  H \cdot v_2 = 2 v_2, 
  &\qquad
  H \cdot v_0 = 0,
  &\qquad
  H \cdot v_{-2} = -2 v_{-2},
  \\
  E \cdot v_2 = 0, 
  &\qquad
  E \cdot v_0 = 2 v_2,
  &\qquad
  E \cdot v_{-2} = v_0,
  \\  
  F \cdot v_2 = v_0, 
  &\qquad
  F \cdot v_0 = 2 v_{-2},
  &\qquad
  F \cdot v_{-2} = 0.
  \end{array}
  \]
If we replace $v_2$ by $-v_2$ then we obtain
  \[
  \begin{array}{lll}
  H \cdot (-v_2) = 2 (-v_2), 
  &\qquad
  H \cdot v_0 = 0,
  &\qquad
  H \cdot v_{-2} = -2 v_{-2},
  \\
  E \cdot (-v_2) = 0, 
  &\qquad
  E \cdot v_0 = -2 (-v_2),
  &\qquad
  E \cdot v_{-2} = v_0,
  \\  
  F \cdot (-v_2) = - v_0, 
  &\qquad
  F \cdot v_0 = 2 v_{-2},
  &\qquad
  F \cdot v_{-2} = 0.
  \end{array}
  \] 
Thus the isomorphism $\s \approx V(2)$ includes a sign change for $E$:
  \begin{equation}
  \label{signchange}
  E \leftrightarrow -v_2,
  \qquad
  H \leftrightarrow v_0,
  \qquad
  F \leftrightarrow v_{-2}.    
  \end{equation}
 
\subsection{Tensor products of modules}

Every finite-dimensional $\s$-module is completely reducible.
A classical result gives the decomposition of the tensor product of two irreducible modules.

\begin{theorem} 
\emph{(Clebsch-Gordan)}
If  $m \ge n \ge 0$ then
  \[
  V(m) \otimes V(n) \, \approx \, \bigoplus_{i=0}^{n} V(m{+}n{-}2i).
  \]
\end{theorem}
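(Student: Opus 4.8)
The plan is to establish the isomorphism by comparing weight-space dimensions and then invoking complete reducibility. First I would index the bases of $V(m)$ and $V(n)$ by $H$-eigenvalue as in \eqref{V(n)weight}, say $\{a_m,a_{m-2},\dots,a_{-m}\}$ and $\{b_n,b_{n-2},\dots,b_{-n}\}$, so that the tensor $a_p\otimes b_q$ is an $H$-eigenvector of eigenvalue $p+q$. The highest weight occurring in $V(m)\otimes V(n)$ is therefore $m+n$, attained only by $a_m\otimes b_n$; and in general the weight $m+n-2i$ is spanned by the vectors $a_{m-2j}\otimes b_{n-2k}$ with $j+k=i$. Counting such pairs subject to $0\le j\le m$ and $0\le k\le n$, and using the hypothesis $m\ge n$, shows that the weight-$(m+n-2i)$ space of $V(m)\otimes V(n)$ has dimension exactly $i+1$ for $0\le i\le n$.

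Next I would convert this into a statement about multiplicities. Write the complete-reducibility decomposition as $V(m)\otimes V(n)\approx\bigoplus_{j\ge 0} d_j\,V(m+n-2j)$ with integers $d_j\ge 0$; all summands have highest weight $\le m+n$ and $\equiv m+n\pmod 2$, since every weight of the tensor product has that form. Because $V(\ell)$ contributes exactly one basis vector to each of the weights $\ell,\ell-2,\dots,-\ell$, the weight-$(m+n-2i)$ space of $V(m)\otimes V(n)$ has dimension $\sum_{j=0}^{i} d_j$ for $0\le i\le n$. Comparing with the value $i+1$ from the first step and telescoping yields $d_j=1$ for $0\le j\le n$. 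A short computation then gives $\sum_{i=0}^{n}\dim V(m+n-2i)=(n+1)(m+1)=\dim\bigl(V(m)\otimes V(n)\bigr)$, forcing $d_j=0$ for $j>n$ and completing the argument.

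I expect the only real friction to be the bookkeeping in the dimension count of the first step: keeping straight the constraints $0\le j\le m$ and $0\le k\le n$, and seeing precisely where $m\ge n$ guarantees the unobstructed value $i+1$ throughout the range $0\le i\le n$; after that, the telescoping and the dimension tally are routine. As a fallback I would use the character-theoretic proof instead --- multiply the formal characters $\chi_m=x^m+x^{m-2}+\cdots+x^{-m}$ and $\chi_n$ and verify the polynomial identity $\chi_m\chi_n=\sum_{i=0}^{n}\chi_{m+n-2i}$ by a direct manipulation or induction on $n$, then pass back to modules using complete reducibility and the linear independence of characters. If an explicit decomposition were wanted later, the same weight analysis produces a highest-weight vector of each weight $m+n-2i$ by recursively peeling off the top summand.
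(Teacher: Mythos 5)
Your argument is correct. The paper offers no proof of this theorem at all --- it is quoted as a classical result, with the representation-theoretic background referred to Erdmann--Wildon and Humphreys --- so there is no in-paper argument to compare against; your weight-multiplicity count is the standard proof and it goes through as written. The one bookkeeping point you flagged does work out: for $0\le i\le n$ with $m\ge n$ the weight $m+n-2i$ is $\ge m-n\ge 0$, the pairs $(j,k)$ with $j+k=i$, $0\le j\le m$, $0\le k\le n$ are exactly $k=0,\dots,i$, giving dimension $i+1$; and since every summand $V(\ell)$ has $\ell\le m+n$ with $\ell\equiv m+n\pmod 2$, the summands meeting that weight are precisely $V(m+n-2j)$ with $j\le i$, so the telescoping gives $d_j=1$ for $0\le j\le n$. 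The final tally $\sum_{i=0}^{n}(m+n-2i+1)=(n+1)(m+1)=\dim\bigl(V(m)\otimes V(n)\bigr)$ then rules out any further summands, so nothing is missing. The character-theoretic fallback you mention is an equally valid route and is essentially the same computation phrased multiplicatively.
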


If $m = n$ then the tensor product is the direct sum of symmetric and exterior squares:
$V(n) \otimes V(n) \approx  S^2 V(n) \oplus \Lambda^2 V(n)$.

\begin{corollary} \label{CGcorollary}
If $n \ge 0$ then
  \[
  S^{2} V(n) \, \approx \bigoplus_{k=0}^{\lfloor n/2 \rfloor} V(2n{-}4k),
  \qquad
  \Lambda^{2} V(n) \, \approx \bigoplus_{k=0}^{\lfloor (n-1)/2 \rfloor} V(2n{-}2{-}4k).
  \]
\end{corollary}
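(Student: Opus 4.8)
The plan is to derive both formulas as immediate consequences of the Clebsch-Gordan theorem, using the identity $V(n) \otimes V(n) \approx S^2 V(n) \oplus \Lambda^2 V(n)$ together with a dimension count (or, more precisely, a multiplicity count on the level of characters) to separate the symmetric from the exterior part. First I would apply Clebsch-Gordan with $m = n$ to obtain
\[
V(n) \otimes V(n) \approx \bigoplus_{i=0}^{n} V(2n-2i) \approx V(2n) \oplus V(2n-2) \oplus \cdots \oplus V(2) \oplus V(0),
\]
so the summands $V(2n-2i)$ for $i = 0, 1, \dots, n$ each occur with multiplicity one. The task is then to decide, for each $i$, whether the copy of $V(2n-2i)$ lies in $S^2 V(n)$ or in $\Lambda^2 V(n)$.

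Next I would pin this down by examining highest weight vectors explicitly in the tensor product. Using the basis $\{v_n, v_{n-2}, \dots, v_{-n}\}$ of $V(n)$ from \eqref{V(n)weight}, the weight-$(2n-2i)$ subspace of $V(n) \otimes V(n)$ is spanned by the vectors $v_{n-2j} \otimes v_{n-2(i-j)}$ for $0 \le j \le i$. A highest weight vector of weight $2n-2i$ is a linear combination of these killed by $E$; the key observation is that the swap $v_a \otimes v_b \mapsto v_b \otimes v_a$ is an $\s$-module automorphism of $V(n) \otimes V(n)$ preserving each weight space, and since $V(2n-2i)$ appears with multiplicity one, the unique (up to scalar) highest weight vector of that weight must be an eigenvector of the swap with eigenvalue $\pm 1$. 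I would compute this sign: it is determined by the parity of $i$, with the highest weight vector lying in $S^2 V(n)$ when $i$ is even and in $\Lambda^2 V(n)$ when $i$ is odd. Concretely, one checks that the leading term (say $v_n \otimes v_{n-2i}$) forces the $j = i$ term $v_{n-2i} \otimes v_n$ to enter with coefficient $(-1)^i$ relative to it, which is the standard alternating-sign behaviour of Clebsch-Gordan coefficients.

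It follows that $S^2 V(n) \approx \bigoplus_{i \text{ even}} V(2n-2i)$ and $\Lambda^2 V(n) \approx \bigoplus_{i \text{ odd}} V(2n-2i)$, where $i$ ranges over $\{0,1,\dots,n\}$. Writing $i = 2k$ in the symmetric case gives $2n - 2i = 2n - 4k$ with $0 \le 2k \le n$, i.e.\ $0 \le k \le \lfloor n/2 \rfloor$, which is exactly the first displayed formula. Writing $i = 2k+1$ in the exterior case gives $2n - 2i = 2n - 2 - 4k$ with $0 \le 2k+1 \le n$, i.e.\ $0 \le k \le \lfloor (n-1)/2 \rfloor$, which is the second. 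As a sanity check one verifies the dimensions add up: $\dim S^2 V(n) + \dim \Lambda^2 V(n) = \binom{n+2}{2} + \binom{n+1}{2} = (n+1)^2 = \dim(V(n) \otimes V(n))$, consistent with $\sum_{i=0}^n (2n-2i+1) = (n+1)^2$.

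The only subtle point — the ``hard part'' in an otherwise routine argument — is justifying the parity rule for which summand each copy of $V(2n-2i)$ lands in; everything else is bookkeeping. This can be handled either by the multiplicity-one/swap-eigenvector argument sketched above, or alternatively by the cleaner character-theoretic route: compute the formal character of $\Lambda^2 V(n)$ directly from $\mathrm{ch}\,\Lambda^2 V(n) = \tfrac12\big( (\mathrm{ch}\,V(n))^2 - \mathrm{ch}\,V(n)(q^2)\big)$ in terms of the variable $q$ recording $H$-weights, and then recognize the result as the stated alternating sum of characters $\mathrm{ch}\,V(2n-2-4k)$; an induction on $n$ makes this telescoping transparent. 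Either way the proof is short, and I would present the swap-eigenvector version since it makes the sign completely explicit and requires no generating-function manipulation.
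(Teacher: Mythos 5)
Your argument is correct, and it is worth noting that the paper itself gives no proof of this corollary at all: it is stated as an immediate consequence of Clebsch--Gordan, treated as classical. Your proposal therefore supplies exactly the missing justification, and it does so by the standard route. The two pillars are sound: (i) by Clebsch--Gordan with $m=n$, each $V(2n-2i)$ occurs with multiplicity one in $V(n)\otimes V(n)$, so its highest weight vector spans the one-dimensional space of $E$-annihilated vectors of weight $2n-2i$, which the swap automorphism preserves, forcing the summand into one of the two eigenspaces $S^2V(n)$ (eigenvalue $+1$) or $\Lambda^2V(n)$ (eigenvalue $-1$); (ii) the sign is pinned down by the coefficient recursion. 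You wave at step (ii) with ``one checks,'' but it is genuinely easy: writing the highest weight vector as $\sum_{j=0}^{i} a_j\, v_{n-2j}\otimes v_{n-2(i-j)}$ and imposing $E$-annihilation via \eqref{V(n)weight} gives $a_{j+1}(n-j)+a_j(n-i+j+1)=0$, whence $a_i/a_0 = (-1)^i\prod_{k=0}^{i-1}\frac{n-i+k+1}{n-k} = (-1)^i$, so the swap eigenvalue is $(-1)^i$ as you claim. The reindexing $i=2k$, $i=2k+1$ and the dimension check are fine. The only presentational caution is that you should state explicitly that the full irreducible summand, not just its highest weight vector, lies in the corresponding eigenspace --- which is immediate since $S^2V(n)$ and $\Lambda^2V(n)$ are submodules --- but this is a one-line remark, not a gap.
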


In this paper we are concerned primarily with $V(6)$; we have
  \[
  S^2 V(6) \approx V(12) \oplus V(8) \oplus V(4) \oplus V(0),
  \qquad
  \Lambda^2 V(6) \approx V(10) \oplus V(6) \oplus V(2).
  \]
In particular, both $V(6)$ and the adjoint module $V(2)$ occur as summands of the exterior square,
providing the following $\s$-module morphisms, which are unique up to nonzero scalar multiples:
  \[
  \alpha\colon \Lambda^2 V(6) \to V(6),
  \qquad
  \beta\colon \Lambda^2 V(6) \to V(2).
  \]

\begin{theorem}
\emph{(Bremner and Hentzel \cite{BremnerHentzel})}
The bilinear product $[x,y] = \alpha( x \wedge y )$ makes $V(6)$ 
isomorphic to the 7-dimensional simple non-Lie Malcev algebra.
\end{theorem}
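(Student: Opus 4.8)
The plan is to make the isomorphism $V(6) \approx M$ completely explicit and then verify the Malcev identity by a finite computation, exploiting the fact that everything in sight is determined up to scalars by $\s$-module theory. First I would fix the weight basis $\{v_6, v_4, v_2, v_0, v_{-2}, v_{-4}, v_{-6}\}$ of $V(6)$ from equation \eqref{V(n)weight}, and then pin down the morphism $\alpha\colon \Lambda^2 V(6) \to V(6)$ explicitly: since $\Lambda^2 V(6) \approx V(10) \oplus V(6) \oplus V(2)$ contains $V(6)$ with multiplicity one, $\alpha$ is unique up to a scalar, and I would compute it by finding the (up to scalar unique) highest weight vector of weight $6$ inside $\Lambda^2 V(6)$ — a vector of the form $\sum_{i+j = -3,\ i<j} c_{ij}\, v_{2i}\wedge v_{2j}$ (shifted appropriately) annihilated by $E$ — and then applying powers of $F$ to sweep out the whole copy of $V(6)$. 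This yields the bracket $[v_a, v_b] = \alpha(v_a \wedge v_b)$ on all pairs of basis vectors, with one global scalar still free.

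Next I would recall the standard model of the simple non-Lie Malcev algebra $M$: take the octonions $\mathbb{O}$ over $\mathbb{C}$, let $M$ be the $7$-dimensional subspace of purely imaginary octonions, and equip it with $[x,y] = xy - yx$. One checks (this is classical, due to Malcev and Kuzmin, and I would merely cite it) that $M$ is a simple non-Lie Malcev algebra; by the uniqueness statement already recorded in the text, it is \emph{the} one. As an $\s$-module, $M$ is known to be irreducible of dimension $7$, hence isomorphic to $V(6)$ as a module; the real content is that the bracket matches. I would choose a Cartan subalgebra and a principal (or suitable) $\s$-triple inside the derivation algebra $\mathfrak{g}_2 = \mathrm{Der}(\mathbb{O})$ acting on $M$, diagonalize $H$ to obtain a weight basis $\{w_6, \dots, w_{-6}\}$ of $M$ normalized to satisfy \eqref{V(n)weight}, and then transport the octonionic commutator into these coordinates.

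The key step is then a \emph{Schur's lemma} argument rather than a brute-force identity check. Both $[\,\cdot\,,\,\cdot\,]_{V(6)} = \alpha(\,\cdot \wedge \cdot\,)$ and the transported octonionic bracket $[\,\cdot\,,\,\cdot\,]_M$ are $\s$-equivariant skew-symmetric bilinear maps $\Lambda^2 V(6) \to V(6)$; since $\mathrm{Hom}_{\s}(\Lambda^2 V(6), V(6))$ is one-dimensional, the two brackets agree up to a single nonzero scalar $c$. That scalar is nonzero precisely because the octonionic bracket on $M$ is not identically zero (indeed $M$ is non-Lie, so in particular the bracket is nontrivial), which forces its $V(6)$-component — the only component that can be nonzero, since a skew bracket lands in $\Lambda^2 V(6)$ and $\mathrm{Hom}_{\s}(\Lambda^2 V(6),V(6))$ is all there is — to be a nonzero multiple of $\alpha$. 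Rescaling the highest weight vector $v_0$ of $V(6)$ (equivalently, rescaling $\alpha$, which the statement allows) absorbs $c$, giving an $\s$-module isomorphism $V(6) \to M$ that intertwines the two brackets. This is exactly an isomorphism of Malcev algebras, so $(V(6), [\,\cdot\,,\,\cdot\,])$ satisfies the Malcev identity and is the simple non-Lie Malcev algebra.

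The main obstacle is the bookkeeping in the Schur's lemma step: one must be certain that $\mathrm{Hom}_{\s}(\Lambda^2 V(6), V(6))$ really is at most one-dimensional (this follows from Corollary \ref{CGcorollary}, since $V(6)$ occurs with multiplicity exactly one in $\Lambda^2 V(6)$, combined with Schur) and that the octonionic commutator on $M$ is genuinely $\s$-equivariant for the chosen triple — i.e.\ that the $\s$ we pick inside $\mathrm{Der}(\mathbb{O})$ acts by derivations of the product, hence of the commutator. Granting equivariance, everything is forced. The only remaining subtlety, the sign/normalization mismatch between $\s$ and $V(2)$ recorded in \eqref{signchange}, does not affect this theorem since $\alpha$ takes values in $V(6)$, not $V(2)$; it will matter only later when $\beta$ enters. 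A fully computational alternative — writing out the $7\times 7$ structure constants of $\alpha$ from the highest weight vector and directly verifying the degree-$4$ Malcev identity of Definition \ref{definitionMalcev} by a finite linear-algebra computation — is available as a check, and is presumably how Bremner and Hentzel \cite{BremnerHentzel} originally proceeded; I would mention it as corroboration but rest the proof on the representation-theoretic argument.
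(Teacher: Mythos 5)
Your argument is essentially correct, but note that the paper itself offers no proof of this statement: it is quoted from Bremner and Hentzel \cite{BremnerHentzel}, whose identification was made computationally (explicit structure constants for $\alpha$, as in Table \ref{alphatable}, compared against the known simple Malcev algebra), and the present paper's Lemma \ref{degree4lemma} then re-verifies the Malcev identities of Definition \ref{definitionMalcev} numerically --- i.e.\ the ``fully computational alternative'' you relegate to a corroborating remark is in fact the route taken by the source. Your Schur's-lemma argument is a genuinely different and cleaner proof: multiplicity one of $V(6)$ in $\Lambda^2 V(6)$ (Corollary \ref{CGcorollary}) plus complete reducibility gives $\dim \mathrm{Hom}_{\s}(\Lambda^2 V(6),V(6))=1$, the octonion commutator on the imaginary octonions is $\s$-equivariant because your $\s$ sits inside $\mathrm{Der}(\mathbb{O})\cong\mathfrak{g}_2$, and the nonzero scalar relating the two brackets is absorbed by rescaling the module isomorphism (over $\mathbb{C}$ any nonzero rescaling of an anticommutative product yields an isomorphic algebra). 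What this buys is a conceptual proof with no structure-constant bookkeeping; what it costs is two imported classical facts that the computational route avoids: that the imaginary octonions under the commutator are \emph{the} simple non-Lie Malcev algebra, and --- the one load-bearing point you should not wave at with ``principal (or suitable)'' --- that the chosen $\mathfrak{sl}_2$-triple is principal in $\mathfrak{g}_2$, since only then is the $7$-dimensional module irreducible (isomorphic to $V(6)$, the eigenvalues of the principal semisimple element being $\pm6,\pm4,\pm2,0$); a non-principal triple decomposes $M$ and the Schur step collapses. With that fact stated and justified (it is classical, going back to Dynkin), your proof is complete and independent of the citation.
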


We consider this bilinear product and the trilinear product 
$(x,y,z) = \beta( x \wedge y ) \cdot z$ induced by the composition of the morphism $\beta$ 
with the natural action of the adjoint module $V(2) \approx \s$ on $V(6)$.
We use the representation theory of $\s$ to calculate the structure constants; 
a more traditional approach using classical invariant theory has been described by Dixmier \cite{Dixmier}.

\subsection{Bilinear and trilinear products on $V(6)$}

In order to calculate explicitly the products on $V(6)$,
we first determine a generating vector in each summand of the exterior square.
We then use the action of $F$ to determine a basis of $H$-eigenvectors for each irreducible summand. From this we form 
the transition matrix from the ``module basis'' of $V(10) \oplus V(6) \oplus V(2)$ to the ``tensor basis'' of
$\Lambda^2 V(6)$.
Inverting this matrix gives the transition matrix from the tensor basis to the module basis, and from this 
we obtain explicit projections from $\Lambda^2 V(6)$ onto $V(6)$ and $V(2)$. 

Consider the standard basis $\{ v_6, v_4, v_2, v_0, v_{-2}, v_{-4}, v_{-6} \}$ of $H$-eigenvectors for 
the irreducible $\s$-module $V(6)$.  
The action of $H, E, F$ is as follows:
  \begin{equation}
  \label{HEFactionV(6)}
  \begin{array}{r|rrrrrrr}
  & 
  v_6 & v_4 & v_2 & v_0 & v_{-2} & v_{-4} & v_{-6}
  \\
  \midrule
  H &
  6 v_6 & 4 v_4 & 2 v_2 & 0 & -2 v_{-2} & -4 v_{-4} & -6 \,v_{-6}
  \\
  E &
  0 & 6 v_6 & 5 v_4 & 4 v_2 & 3 v_0 & 2 v_{-2} & v_{-4}
  \\
  F & 
  v_4 & 2 v_2 & 3 v_0 & 4 v_{-2} & 5 v_{-4} & 6 v_{-6} & 0
  \end{array}
  \end{equation}
  
\begin{definition}
The \textbf{tensor basis} of $\Lambda^2 V(6)$ consists of the 21 elements
  \[
  v_i \wedge v_j
  \quad
  ( \,
  i > j; \,
  i,j \in \{6,4,2,0,-2,-4,-6\} \,
  ),
  \]
where
$v_i \wedge v_j$ precedes $v_{i'} \wedge v_{j'}$ if either (1) $i > i'$ or (2) $i = i'$ and $j > j'$.
\end{definition}

The summand $V(10)$ is generated by $s_{10} = v_6 \wedge v_4$; the other basis vectors 
are found by applying $F$ using equation \eqref{V(n)basis} and the derivation rule, 
  \[
  F \cdot ( x \wedge y ) = ( F \cdot x ) \wedge y + x \wedge ( F \cdot y ).
  \]
A generator $t_6$ for the summand $V(6)$ must be a linear combination of $v_6 \wedge v_0$ and $v_2 \wedge v_4$;
imposing the condition $E \cdot t_6 = 0$ and proceeding as before we obtain the other basis vectors.
A similar calculation produces a basis for the summand $V(2)$.
All these basis vectors are displayed in Table \ref{modulebasistable}.

\begin{definition}
The \textbf{module basis} for $\Lambda^2 V(6)$ consists of the preceding vectors in the order
in which they were computed:
  \[
  s_{10}, \, s_8, \, \dots, s_{-8}, \, s_{-10}, \,
  t_6, \, t_4, \, \dots, t_{-4}, \, t_{-8}, \,
  u_2, \, u_0, \, u_{-2}.
  \]
\end{definition}

Let $A$ be the $21 \times 21$ matrix in which the $(i,j)$ entry is the coefficient of 
the $i$-th tensor basis vector 
in the formula for the $j$-th module basis vector.
The columns of the inverse matrix express the tensor basis vectors as linear combinations of 
the module basis vectors.
In particular, rows 12--18, respectively 19--21, determine the projections $\alpha\colon \Lambda^2 V(6) \to V(6)$,
respectively $\beta\colon \Lambda^2 V(6) \to V(2)$.
From this we obtain Tables \ref{alphatable} and \ref{betatable} for the $\s$-module morphisms $\alpha$ and $\beta$.
The entry $c_{pq}$ in row $v_p$ and column $v_q$ indicates respectively that
  \[ 
  \alpha( v_p \wedge v_q ) = c_{pq} t_{p+q},
  \qquad 
  \beta( v_p \wedge v_q ) = c_{pq} u_{p+q}.
  \] 
We have scaled the entries so that they are relatively prime integers.

  \begin{table}
  \begin{align*}
  s_{10} &= v_6 \wedge v_4, 
  \\
  s_8 &= 2 \, v_6 \wedge v_2, 
  \\
  s_6 &= 3 \, v_6 \wedge v_0 + v_4 \wedge v_2,
  \\
  s_4 &= 4 \, v_6 \wedge v_{-2} + 2 \, v_4 \wedge v_0,
  \\
  s_2 &= 5 \, v_6 \wedge v_{-4} + 3 \, v_4 \wedge v_{-2} + v_2 \wedge v_0,
  \\
  s_0 &= 6 \, v_6 \wedge v_{-6} + 4 \, v_4 \wedge v_{-4} + 2 \, v_2 \wedge v_{-2},
  \\
  s_{-2} &= 5 \, v_4 \wedge v_{-6} + 3 \, v_2 \wedge v_{-4} + v_0 \wedge v_{-2},
  \\
  s_{-4} &= 4 \, v_2 \wedge v_{-6} + 2 \, v_0 \wedge v_{-4},
  \\
  s_{-6} &= 3 \, v_0 \wedge v_{-6} + v_{-2} \wedge v_{-4},
  \\
  s_{-8} &= 2 \, v_{-2} \wedge v_{-6},
  \\
  s_{-10} &= v_{-4} \wedge v_{-6};
  \\
  &
  \\
  t_6 &= 3 \, v_6 \wedge v_0 - 2 \, v_4 \wedge v_2, 
  \\   
  t_4 &= 12 \, v_6 \wedge v_{-2} - 3 \, v_4 \wedge v_0,    
  \\
  t_2 &= 30 \, v_6 \wedge v_{-4} - 3 \, v_2 \wedge v_0,    
  \\
  t_0 &= 60 \, v_6 \wedge v_{-6} + 10 \, v_4 \wedge v_{-4} - 4 v_2 \wedge v_{-2},    
  \\
  t_{-2} &= 30 \, v_4 \wedge v_{-6} - 3 \, v_0 \wedge v_{-2},    
  \\
  t_{-4} &= 12 \, v_2 \wedge v_{-6} - 3 \, v_0 \wedge v_{-4},    
  \\
  t_{-6} &= 3 \, v_0 \wedge v_{-6} - 2 v_{-2} \wedge v_{-4};
  \\ 
  &
  \\  
  u_2 &= 15 \, v_6 \wedge v_{-4} - 5 \, v_4 \wedge v_{-2} + 3 \, v_2\wedge v_0,    
  \\
  u_0 &= 90 \, v_6 \wedge v_{-6} - 10 \, v_4 \wedge v_{-4} + 2 \, v_2 \wedge v_{-2},
  \\    
  u_{-2} &= 15 \, v_4 \wedge v_{-6} - 5 \, v_2 \wedge v_{-4} + 3 \, v_0 \wedge v_{-2}.   
  \end{align*}
  \caption{The module basis for $\Lambda^2 V(6)$}
  \label{modulebasistable}
  \end{table}

  \begin{table}
  \[
  \begin{array}{r|rrrrrrr} 
  \alpha & v_6 & v_4 & v_2 & v_0 & v_{-2} & v_{-4} & v_{-6} 
  \\ 
  \midrule
  v_6    &   . &  . &   . &  20 &  10 &   4 &  1 
  \\ 
  v_4    &   . &  . & -60 & -20 &   . &   6 &  4 
  \\ 
  v_2    &   . & 60 &   . & -20 & -15 &   . & 10 
  \\ 
  v_0    & -20 & 20 &  20 &   . & -20 & -20 & 20 
  \\ 
  v_{-2} & -10 &  . &  15 &  20 &   . & -60 & . 
  \\ 
  v_{-4} &  -4 & -6 &   . &  20 &  60 &   . & . 
  \\ 
  v_{-6} &  -1 & -4 & -10 & -20 &   . &   . & .
  \end{array}
  \]
  \caption{The $\s$-module morphism $\alpha\colon \Lambda^2 V(6) \to V(6)$}
  \label{alphatable}  
  \[
  \begin{array}{r|rrrrrrr} 
  \beta & v_6 & v_4 & v_2 & v_0 & v_{-2} & v_{-4} & v_{-6} 
  \\ 
  \midrule
  v_6    &  . &  . &   . &   . &   . &   2 & 1 
  \\ 
  v_4    &  . &  . &   . &   . & -10 &  -4 & 2 
  \\ 
  v_2    &  . &  . &   . &  20 &   5 & -10 & . 
  \\ 
  v _0   &  . &  . & -20 &   . &  20 &   . & . 
  \\ 
  v_{-2} &  . & 10 &  -5 & -20 &   . &   . & . 
  \\ 
  v_{-4} & -2 &  4 &  10 &   . &   . &   . & . 
  \\ 
  v_{-6} & -1 & -2 &   . &   . &   . &   . & .
  \end{array}
  \]
  \caption{The $\s$-module morphism $\beta\colon \Lambda^2 V(6) \to V(2)$}
  \label{betatable}
  \end{table}

\subsection{Another construction of the morphisms $\alpha$ and $\beta$}

If $V$ is a vector space of finite dimension $N$ over a field $F$, then the endomorphism algebra
$\mathrm{End}_F(V)$ is naturally isomorphic to $V^\ast \otimes_F V$.  If we choose a basis for $V$ then
we can identity $\mathrm{End}_F(V)$ with the algebra $M_N(F)$ of $N \times N$ matrices over $F$.  We consider
the special case $V = V(n)$, the irreducible $\s$-module of dimension $N = n+1$.  We identify $V(n)$ with
the space of homogeneous polynomials of degree $n$ in $x$ and $y$; the action of $\s$ is given by these formulas:
  \[
  H = x \frac{\partial}{\partial x} - y \frac{\partial}{\partial y},
  \qquad
  E = x \frac{\partial}{\partial y},
  \qquad
  F = y \frac{\partial}{\partial x}.
  \]
We identify the basis vectors $v_{n-2i}$ of $V(n)$ with monomials as follows:
  \[
  v_{n-2i} = \binom{n}{i} x^{n-i} y^i.
  \]
This action of $\s$ satisfies equations \eqref{V(n)weight}.
Since $V(n)$ is isomorphic to its dual, we obtain an embedding of $\s$ into the general linear Lie
algebra $\mathfrak{gl}(n{+}1,\mathbb{C})$, which is an $\s$-module if we define the action by 
$A \cdot B = [A,B]$ for $A \in \s$ and $B \in \mathfrak{gl}(n{+}1,\mathbb{C})$.  Corollary \ref{CGcorollary} 
gives the decomposition of this $\s$-module into a direct sum of irreducible modules,
and the divided powers $E^i\!/i!$ ($0 \le i \le n$) of the matrix representing $E$ are 
the highest weight vectors of the summands.

In the special case $n = 4$ we obtain this embedding of $\s$ into $\mathfrak{gl}(5,\mathbb{C})$:
  \[
  H, E, F
  \mapsto
  \left[
  \begin{array}{rrrrr}
  4 & . &\, . &\!\!\!\!  . &\!\!\!\!  . \\
  . & 2 &\, . &\!\!\!\!  . &\!\!\!\!  . \\
  . & . &\, . &\!\!\!\!  . &\!\!\!\!  . \\
  . & . &\, . &\!\!\!\! -2 &\!\!\!\!  . \\
  . & . &\, . &\!\!\!\!  . &\!\!\!\! -4 
  \end{array}
  \right],
  \left[
  \begin{array}{rrrrr}
  . & 1 & . & . & . \\
  . & . & 2 & . & . \\
  . & . & . & 3 & . \\
  . & . & . & . & 4 \\
  . & . & . & . & . 
  \end{array}
  \right],
  \left[
  \begin{array}{rrrrr}
  . & . & . & . & . \\
  4 & . & . & . & . \\
  . & 3 & . & . & . \\
  . & . & 2 & . & . \\
  . & . & . & 1 & . 
  \end{array}
  \right].
  \]
The decomposition of $\mathfrak{gl}(5,\mathbb{C})$ as an $\s$-module is given abstractly as follows,
  \[
  V(4) \otimes V(4) \approx V(0) \oplus V(2) \oplus V(4) \oplus V(6) \oplus V(8),
  \] 
where the symmetric and exterior squares are
  \[
  S^2 V(4) \approx V(0) \oplus V(4) \oplus V(8),
  \qquad
  \Lambda^2 V(4) \approx V(2) \oplus V(6).
  \]
A basis for $V(0)$ is the identity matrix.
A basis for $V(2)$ consists of the images of $H, E, F$ given above.
Highest weight vectors for $V(4)$, $V(6)$, $V(8)$ are the matrices
  \[
  \frac{E^2}{2}
  =
  \left[
  \begin{array}{rrrrr}
  . & . & 1 & . & . \\
  . & . & . & 3 & . \\
  . & . & . & . & 6 \\
  . & . & . & . & . \\
  . & . & . & . & . 
  \end{array}
  \right],
  \;
  \frac{E^3}{6}
  =
  \left[
  \begin{array}{rrrrr}
  . & . & . & 1 & . \\
  . & . & . & . & 4 \\
  . & . & . & . & . \\
  . & . & . & . & . \\
  . & . & . & . & . 
  \end{array}
  \right],
  \;
  \frac{E^4}{24}  
  =
  \left[
  \begin{array}{rrrrr}
  . & . & . & . & 1 \\
  . & . & . & . & . \\
  . & . & . & . & . \\
  . & . & . & . & . \\
  . & . & . & . & . 
  \end{array}
  \right].  
  \]
The other basis matrices for these summands are obtained using equation \eqref{V(n)basis}. 
Given two matrices $A, B$ in the subspace corresponding to $V(6)$, we calculate the commutator $[A,B]$
and its projections onto the summands $V(6)$ and $V(2)$: this is another way to compute the morphisms
$\alpha$ and $\beta$.  The direct sum $V(2) \oplus V(6)$ is a subspace of
$\mathfrak{sl}(5,\mathbb{C})$ isomorphic to the orthogonal simple Lie algebra $\mathfrak{o}(5,\mathbb{C})$.


\section{Nonassociative polynomials and computational methods} \label{computational}

We recall some definitions about the combinatorics of free nonassociative structures,
and describe our computational methods for studying polynomial identities.

\begin{definition}
A \textbf{binary association type} in degree $n$ is a placement of $n{-}1$ pairs of 
anticommutative brackets $[-,-]$ in the word $x = x_1 x_2 \cdots x_n$. Each
type has a unique factorization $x = [y,z]$ where $n > \deg(y) \ge \deg(z) \ge 1$. 
Types have a \textbf{standard order}: 
$[y,z]$ precedes $[y',z']$ if 
($i$) $\deg(y) > \deg(y')$, or 
($ii$) $\deg(y) = \deg(y')$ and $y$ precedes $y'$, or 
($iii$) $y = y'$ and $\overline{z}$ precedes $\overline{z}'$ where the bar replaces $x_{p+1} \cdots x_{p+q}$ by $x_1
\cdots x_q$ for $p = \deg(y)$ and $q = \deg(z)$.
\end{definition}

\begin{definition} \label{definitionbinary1}
A \textbf{binary monomial} in degree $n$ is a permutation $\pi \in S_n$ applied to 
an association type: $\pi ( x_1 \cdots x_n ) = x_{\pi(1)} \cdots x_{\pi(n)}$,
brackets omitted. If $x$ and $x'$ are two binary monomials with the same degree and association type,
then $x$ and $x'$ are \textbf{equivalent} if anticommutativity implies $x = \pm x'$. A binary
monomial is in \textbf{normal form} if every submonomial $[y,y']$, where $y$ and $y'$ 
have the same association type and first symbols $x_i$ and $x_j$, satisfies $i < j$.
\end{definition}

\begin{definition} \label{definitionbinary3}
The space $P_n$ of \textbf{binary polynomials} in degree $n$ is the vector space 
with basis consisting of the binary monomials in normal form. This space is an $S_n$-module: 
we apply a permutation $\pi$ to the subscripts in $x$, 
preserving the association type; we obtain the monomial $x^\pi$, which in general is
not in normal form; we apply anticommutativity to obtain $\pi \cdot x = \pm x'$, where $x'$ is the
unique monomial in normal form equivalent to $x^\pi$.
\end{definition}

\begin{definition}
Let $I(x_1,x_2,\hdots,x_n)$ be a binary polynomial of degree $n$. From $I$ we obtain $n+1$
\textbf{liftings} to degree $n+1$; we introduce the symbol $x_{n+1}$ and perform $n$
internal brackets and one external bracket:
  \[
  I( [ x_1 x_{n+1} ], x_2, \hdots, x_n ),
  \;
  \hdots,
  \;
  I( x_1, x_2, \hdots, [ x_n x_{n+1} ] ),
  \quad
  [ I( x_1, x_2, \hdots, x_n ), x_{n+1} ].
  \]
If $I$ is a polynomial identity for an algebra $A$ then each lifting is an identity for $A$.
\end{definition}

\begin{definition}
Let $(-,-,-)$ denote a ternary operation which is skew-symmetric in the first and second arguments.
A \textbf{ternary association type} in odd degree $n$ is a valid placement of $(n{-}1)/2$ pairs of
parentheses in $x = x_1 x_2 \cdots x_n$. Each type has a unique factorization $x = (y,z,w)$ where
$n > \deg(y) \ge \deg(z) \ge 1$. The \textbf{standard order} on types is defined analogously to the binary case,
and Definition \ref{definitionbinary1} extends in the obvious way to ternary monomials.
\end{definition}

\begin{definition}
The space $Q_n$ of \textbf{ternary polynomials} in odd degree $n$ is the vector space 
with basis consisting of the ternary monomials in normal form. 
Definition \ref{definitionbinary3} extends in the obvious way to make $Q_n$ into an $S_n$-module.
\end{definition}

\begin{definition}
Let $I(x_1,x_2,\hdots,x_n)$ be a ternary polynomial of odd degree $n$. From $I$ we obtain $n{+}2$
\textbf{liftings} in degree $n{+}2$; we introduce the symbols $x_{n+1}$, $x_{n+2}$ and
perform $n$ internal products and two external products:
  \allowdisplaybreaks \begin{align*}
  &
  I( ( x_1 x_{n+1} x_{n+2} ), x_2, \hdots, x_n ),
  \quad
  \hdots,
  \quad
  I( x_1, x_2, \hdots, ( x_n x_{n+1} x_{n+2} ) ),
  \\
  &
  ( I( x_1, x_2, \hdots, x_n ), x_{n+1}, x_{n+2} ),
  \quad
  ( x_{n+1}, x_{n+2}, I( x_1, x_2, \hdots, x_n ) ).
  \end{align*}
If $I$ is a polynomial identity for a ternary algebra $A$ then so is each lifting.
\end{definition}

We also consider polynomials in which each term can be either a binary monomial, a ternary monomial,
or a mixed monomial in which both operations appear.
The following table gives the number of types and monomials for degrees $\le 7$:
  \begin{center}
  \begin{tabular}{rrrrrr}
  degree & binary & ternary & mixed & b+t+m & monomials \\
  1 &  $\cdot$ & $\cdot$ &  $\cdot$ &   $\cdot$ &      1 \\
  2 &  1 & $\cdot$ &  $\cdot$ &   1 &      1 \\
  3 &  1 & 1 &  $\cdot$ &   2 &      6 \\
  4 &  2 & $\cdot$ &  3 &   5 &     45 \\
  5 &  3 & 2 &  8 &  13 &    510 \\
  6 &  6 & $\cdot$ & 32 &  38 &   7245 \\
  7 & 11 & 6 & 96 & 113 & 126630
  \end{tabular}
  \end{center}
The total number of types is obtained recursively by setting $T_1 = 1$ and using the following
formula, where $\delta = 0$ if $n$ is odd and $\delta = 1$ if $n$ is even:
  \[
  T_n
  =
  \sum_{\tiny \begin{array}{c} i+j=n, \\ i>j \end{array}} 
  \!\!\!
  T_i T_j
  \; + \;
  \delta \, \binom{T_{n/2}}{2}
  \; +
  \!\!\!
  \sum_{\tiny \begin{array}{c} i+j+k=n, \\ i>j \end{array}} 
  \!\!\!
  T_i T_j T_k
  \; + \;
  \delta \, \binom{T_{n/2}}{2} T_k.
  \]
Our computational methods are practical for degrees $\le 7$ with binary or ternary identities,
but only for degrees $\le 6$ with mixed identities, owing to the very large number of mixed monomials
of degree 7.

Two algorithms  are used to determine polynomial identities: 
  \begin{itemize}
  \item
``Fill-and-reduce'' determines a basis for the vector space of multilinear identities of degree $n$ 
satisfied by an algebra $A$.
  \item
``Module generators'' extracts a subset of the basis which
generates the space of identities as an $S_n$-module.
  \end{itemize} 
We describe these algorithms for an algebra with one binary operation; 
the generalization to binary-ternary algebras is straightforward.

\subsection{Fill-and-reduce} 

Let $A$ be a nonassociative algebra of dimension $d$, and let $q$ be the number of multilinear 
monomials in degree $n$. We create a matrix $E$ of size $(q+d) \times q$ 
consisting of a $q \times q$ upper block and a $d \times q$ lower block
and initialize $E$ to zero. We perform the following steps until the rank of $E$ has stabilized; that is, 
the rank has not increased for some fixed number $s$ of iterations:
  \begin{enumerate}
  \item[(1)]
Generate $n$ pseudorandom column vectors $x_1, \hdots, x_n$ of dimension $d$, representing elements
of the algebra $A$.
  \item[(2)]
For each $j = 1, \hdots, q$, evaluate monomial $j$ on the elements $x_1, \hdots,
x_n$ and store the resulting column vector in column $j$ of the lower block.
  \item[(3)]
Compute the row canonical form of $E$; the lower block of $E$ is now zero.
  \end{enumerate}
After the rank has stabilized, the nullspace of $E$ contains the coefficient vectors of the linear
dependence relations on the monomials that are satisfied by many pseudorandom
choices of elements of $A$.  We extract the canonical basis of the nullspace; 
these are the coefficients vectors of nonassociative polynomials which are ``probably'' polynomial
identities satisfied by $A$; these identities still need to be proven directly, or checked
independently by another computation.

\subsection{Module generators} 

In order to reduce the number of polynomial identities, we extract a set of $S_n$-module generators 
from the linear basis of the nullspace computed by the fill-and-reduce algorithm.
Let $I_1, \hdots, I_\ell$ be a basis for the space of multilinear identities in degree $n$ satisfied 
by an algebra $A$. We create a matrix $G$ of size $(q+n!) \times q$ consisting of a $q \times q$ upper block
and an $n! \times q$ lower block and initialize $G$ to zero.
(As before, $q$ is the number of multilinear monomials.)

We set $\mathrm{oldrank} \leftarrow 0$ and then perform the following steps for $k = 1, \hdots, \ell$:
  \begin{enumerate}
  \item[(1)]
Set $i \leftarrow 0$.
  \item[(2)]
For each permutation $\pi$ in $S_n$ do:
    \begin{enumerate}
    \item[(i)]
Set $i \leftarrow i+1$.
    \item[(ii)]
For each $j = 1, \hdots, q$ do:
      \begin{itemize}
      \item
Let $c_j$ be the coefficient in $I_k$ corresponding to monomial $m_j$.
If $c_j \ne 0$ then apply $\pi$ to $m_j$ obtaining $\pi m_j$ and replace $\pi m_j$ by its normal form
$m_{j'}$, keeping track of sign changes.
Store the resulting coefficient $\pm c_j$ in entry $(i,j')$ of the lower block.
      \end{itemize}
    \end{enumerate}
  \item[(3)]
Compute the row canonical form of $G$; the lower block is now zero.
  \item[(4)]
Set $\mathrm{newrank} \leftarrow \mathrm{rank}(G)$.
  \item[(5)]
If $\mathrm{oldrank} < \mathrm{newrank}$ then:
    \begin{enumerate}
    \item[(i)]
Record $I_k$ as a new module generator.
    \item[(ii)]
Set $\mathrm{oldrank} \leftarrow \mathrm{newrank}$.
    \end{enumerate}
  \end{enumerate}
Suppose that we have a set of identities in degree $n$ which generate all the consequences in degree $n$ 
of known identities in degree $< n$.  We apply the module generators algorithm; at termination 
the row space of $G$ contains a basis for the subspace of
identities which are consequences of identities of lower degree. Without changing $G$,
we then apply the
module generators algorithm to the linear basis produced by the fill-and-reduce algorithm. 
We obtain a set of module generators for the space of all identities in
degree $n$ modulo the space of known identities in degree $n$; that is, a set of module generators
for the new identities in degree $n$.

\subsection{Rational arithmetic and modular arithmetic}

We usually assume that the algebra $A$ is defined over the field $\mathbb{Q}$ of rational numbers;
but in order to save computer time and memory we often use modular arithmetic.
The structure theory of the group algebra $\mathbb{Q} S_n$ shows that the primitive 
orthogonal idempotents, and hence the matrix units in the Wedderburn decomposition, all have 
coefficients whose denominators are divisors of $n!$; see Clifton \cite{Clifton}.
Hence computations with modular arithmetic for any prime $p > n$ will provide the ``same''
results: in particular, the ranks of the matrices produced by the two algorithms will be equal.

Suppose that $I$ is an identity with rational coefficients satisfied by the algebra $A$ which
has integral structure constants with respect to a given basis. We multiply $I$ by the least common
multiple of the denominators of its coefficients, obtaining an identity $I'$ with integral
coefficients; we then divide $I'$ by the greatest common divisor of its coefficients, obtaining an
identity $I''$ which is primitive in the sense that its coefficients are integers with no common
factor. It is clear that $I''$ is a polynomial identity satisfied by the algebra $A$, and that the
reduction of $I''$ modulo $p$ is nonzero for any prime number $p$. Therefore, existence of identities
in characteristic 0 implies existence in characteristic $p$ for all $p$, and so
non-existence in characteristic $p$ for any $p$ implies non-existence in characteristic 0.

For a more detailed discussion of these issues, see Bremner and Peresi \cite[\S 5]{BremnerPeresi2}.


\section{Polynomial identities for the bilinear operation} \label{sectionbinary}

We study the identities of degree $\le 7$ for the simple non-Lie Malcev algebra.

\begin{lemma} \label{degree4lemma}
Every polynomial identity of degree $\le 4$ satisfied by the bilinear operation $[x,y] = \alpha( x \wedge y)$ on the 
$\s$-module $V(6)$ is a consequence of the defining identities for Malcev algebras from Definition \ref{definitionMalcev}.
\end{lemma}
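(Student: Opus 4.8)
The plan is to verify the lemma by a finite computation comparing two subspaces of the space $P_n$ of binary polynomials, for each $n \le 4$. On one side we have $J_n$, the space of all multilinear identities of degree $n$ satisfied by the concrete algebra $V(6)$ with product $[x,y] = \alpha(x \wedge y)$; on the other side we have $C_n$, the space of consequences in degree $n$ of the anticommutativity and Malcev identities from Definition \ref{definitionMalcev}. Since every Malcev algebra identity is clearly satisfied by $V(6)$ (by the theorem of Bremner and Hentzel, $V(6)$ \emph{is} a Malcev algebra), we always have $C_n \subseteq J_n$, and the content of the lemma is the reverse inclusion, i.e. $\dim C_n = \dim J_n$ for $n = 2, 3, 4$.

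First I would treat the trivial degrees. For $n = 2$ the only association type is $[x_1, x_2]$ and the only identity up to scalar is anticommutativity itself, so $J_2 = C_2$ has dimension $1$ (or $0$, depending on whether one counts anticommutativity as an identity or builds it into $P_2$); either way there is nothing to prove. For $n = 3$ there are two normal-form monomials, $[[x_1,x_2],x_3]$ and $[[x_1,x_3],x_2]$ (after using anticommutativity), the space $P_3$ has dimension $6$, and one checks by direct evaluation on random elements of $V(6)$ that no identity beyond those already forced by anticommutativity holds — the Jacobi identity famously \emph{fails} for the simple non-Lie Malcev algebra — so again $J_3 = C_3$.

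The substantive case is $n = 4$. Here I would compute $\dim J_4$ using the fill-and-reduce algorithm of Section \ref{computational}: build the $(q+d)\times q$ matrix with $q = 45$ the number of multilinear binary monomials of degree $4$ and $d = 7 = \dim V(6)$, repeatedly fill the lower block with values of the $45$ monomials on pseudorandom $7$-tuples from $V(6)$ (using the structure constants read off from Table \ref{alphatable}), row-reduce, and stop once the rank stabilizes; then $\dim J_4 = 45 - \mathrm{rank}$. To compute $\dim C_4$ I would apply the module generators / lifting machinery to the two defining identities: anticommutativity contributes its degree-$4$ consequences (these just cut $P_4$ down to the anticommutative monomials, already built into the $q=45$ count if we work in normal form), and the Malcev identity of Definition \ref{definitionMalcev} is itself multilinear of degree $4$, so its only degree-$4$ consequences are the images of the single generator under the $S_4$-action; I would form the $S_4$-orbit matrix of the Malcev polynomial and take its rank. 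The lemma then follows from the numerical coincidence $\dim C_4 = \dim J_4$. To make the argument rigorous rather than merely probabilistic, I would rerun the fill-and-reduce step over a prime $p > 4$ (so that rational and modular ranks agree, as explained in Section \ref{computational}) and confirm that the putative identities found are exactly the $S_4$-span of the Malcev identity by an independent symbolic check.

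The main obstacle is the usual one for this style of result: the fill-and-reduce rank is only guaranteed to be an upper bound for $\mathrm{rank}$ (hence a lower bound for $\dim J_4$) until enough pseudorandom samples have been taken, so one must argue that the rank has genuinely stabilized and not merely paused; and one must be certain the nullspace vectors produced are honest identities, not artifacts of unlucky sampling. I would address this by taking $s$ large, by the characteristic-$0$-versus-characteristic-$p$ comparison, and — since degree $4$ is small enough — by symbolically verifying that each basis element of the computed nullspace lies in the $S_4$-module generated by the Malcev identity, which simultaneously proves $J_4 \subseteq C_4$ and closes the argument.
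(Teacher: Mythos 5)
Your proposal follows essentially the same route as the paper: fill-and-reduce on pseudorandom elements of $V(6)$ to compute the degree-$4$ identities, then a check that the $S_4$-orbit of the Malcev identity spans the computed nullspace (the paper builds a $22\times 15$ matrix, finds rank $10$ and a $5$-dimensional nullspace, and verifies that the $24$ permutations of the recovered Malcev-type identity have the same row space). Two small slips worth correcting: the number of normal-form anticommutative binary monomials in degree $4$ is $15$ (twelve of type $[[[-,-],-],-]$ and three of type $[[-,-],[-,-]]$), not $45$, which is the count for the binary--ternary--mixed setting of Section \ref{sectionmixed}; and the computed rank is a \emph{lower} bound for the true rank, so the computed nullspace over-approximates the space of identities---which is exactly why the final symbolic verification you propose (each nullspace basis vector lies in the $S_4$-module generated by the Malcev identity) is the step that closes the argument, just as in the paper.
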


\begin{proof}
It is easy to check that every identity of degree $\le 3$ satisfied by $[x,y]$ is a consequence of
anticommutativity.  In degree 4, there are two association types and 15 multilinear monomials 
for an anticommutative operation:
  \[
  \begin{array}{llllll}
  {[[[a{,}b]{,}c]{,}d]}{,} &
  [[[a{,}b]{,}d]{,}c]{,} &
  [[[a{,}c]{,}b]{,}d]{,} &
  [[[a{,}c]{,}d]{,}b]{,} &
  [[[a{,}d]{,}b]{,}c]{,} &
  [[[a{,}d]{,}c]{,}b]{,}
  \\
  {[[[b{,}c]{,}a]{,}d]}{,} &
  [[[b{,}c]{,}d]{,}a]{,} &
  [[[b{,}d]{,}a]{,}c]{,} &
  [[[b{,}d]{,}c]{,}a]{,} &
  [[[c{,}d]{,}a]{,}b]{,} &
  [[[c{,}d]{,}b]{,}a]{,} 
  \\
  {[[a{,}b]{,}[c{,}d]]}{,} &
  [[a{,}c]{,}[b{,}d]]{,} &
  [[a{,}d]{,}[b{,}c]].
  \end{array}
  \]  
We create a $22 \times 15$ matrix with a $15 \times 15$ upper block
and a $7 \times 15$ lower block.  
During each iteration, we perform the following fill-and-reduce calculation:
  \begin{enumerate}
  \item
generate four pseudorandom vectors of dimension 7 with integer components of absolute value $\le 9$;
  \item
assign these vectors to the variables $a,b,c,d$ and evaluate each of the 15 monomials using the
structure constants of Table \ref{alphatable};
  \item
for each $j$, store the $7 \times 1$ column vector obtained from monomial $j$ in column $j$ of the 
lower block;
  \item
compute the row canonical form (RCF) of the matrix.
  \end{enumerate}
After the second iteration, the rank of the matrix is 10, and this value does not increase during 
another 100 iterations.  The nonzero rows are as follows:
  \[
  \left[
  \begin{array}{rrrrrrrrrrrrrrr}
  1 & . & . & . & . & . & . & . & . &  . & . &  1 &  . &  1 &  . \\
  . & 1 & . & . & . & . & . & . & . &  . & . & -1 &  . &  . &  1 \\
  . & . & 1 & . & . & . & . & . & . &  1 & . &  . &  1 &  . &  . \\
  . & . & . & 1 & . & . & . & . & . & -1 & . &  . &  . &  . & -1 \\
  . & . & . & . & 1 & . & . & . & . &  1 & . & -1 &  . & -1 &  1 \\
  . & . & . & . & . & 1 & . & . & . & -1 & . &  1 & -1 &  . & -1 \\
  . & . & . & . & . & . & 1 & . & . &  1 & . & -1 &  . &  . &  1 \\
  . & . & . & . & . & . & . & 1 & . & -1 & . &  1 & -1 &  1 & -1 \\
  . & . & . & . & . & . & . & . & 1 &  1 & . &  . &  1 &  . &  1 \\
  . & . & . & . & . & . & . & . & . &  . & 1 &  1 &  . &  1 & -1
  \end{array}
  \right]
  \]
The canonical basis of the nullspace consists of the rows of the following matrix, which have been sorted
by increasing number of nonzero components:
  \[
  \left[
  \begin{array}{rrrrrrrrrrrrrrr}
  -1 &  . &  . & . &  1 &  . &  . & -1 &  . & . & -1 & . & . & 1 & . \\
   . &  . & -1 & . &  . &  1 &  . &  1 & -1 & . &  . & . & 1 & . & . \\
  -1 &  1 &  . & . &  1 & -1 &  1 & -1 &  . & . & -1 & 1 & . & . & . \\
   . &  . & -1 & 1 & -1 &  1 & -1 &  1 & -1 & 1 &  . & . & . & . & . \\
   . & -1 &  . & 1 & -1 &  1 & -1 &  1 & -1 & . &  1 & . & . & . & 1
  \end{array}
  \right]
  \]
The first row is the coefficient vector of this polynomial identity:
  \begin{equation}
  \label{m-identity}
  {} 
  - [[a,b],c],d] 
  + [[a,d],b],c] 
  - [[b,c],d],a] 
  - [[c,d],a],b] 
  + [a,c],[b,d]]; 
  \end{equation}
this is equivalent to the second identity of Definition \ref{definitionMalcev}.  We apply all 24 permutations 
of $a,b,c,d$ to this identity, and verify that the resulting $24 \times 15$ matrix of coefficient vectors has 
the same row space as the preceding $5 \times 15$ matrix.
\end{proof}

For notational convenience, we omit the brackets for polynomials of degree $\ge 5$ in an anticommutative
operation, and write the product as juxtaposition.

\begin{definition} \label{h-polynomial}
(Filippov \cite{Filippov}, Elduque \cite{Elduque})
The \textbf{Filippov $h$-polynomial} is the following nonassociative polynomial in any anticommutative algebra:
  \begin{align*}
  h(a,b,c,d,e)
  &= 
     (((ab)c)d)e 
  +  (((ab)c)e)d 
  -  (((ab)d)c)e 
  -  (((ab)e)c)d
  \\
  &\quad
  +  (((ad)b)e)c 
  -  (((ad)e)b)c
  +  (((ae)b)d)c 
  -  (((ae)d)b)c
  \\
  &\quad 
  + 2((ab)(cd))e
  + 2((ab)(ce))d 
  + 2((ad)(be))c 
  + 2((ae)(bd))c.
  \end{align*}
\end{definition}

\begin{definition} \label{18termidentity}
The \textbf{18-term identity} is the following nonassociative polynomial in any anticommutative algebra:
  \begin{align*}
  k(a,b,c,d,e)
  &=
  - (((ab)c)e)d 
  + (((ab)e)d)c 
  + (((ac)b)d)e 
  - (((ac)d)e)b 
  \\
  &\quad
  - (((ad)b)c)e 
  - (((ad)c)b)e 
  + (((ae)b)c)d 
  + (((ae)c)b)d 
  \\
  &\quad
  + (((bd)c)a)e 
  - (((be)a)d)c
  + (((be)c)d)a 
  - (((be)d)a)c
  \\
  &\quad 
  + (((cd)a)e)b 
  - (((cd)b)e)a 
  + (((cd)e)a)b 
  - (((ce)b)a)d 
  \\
  &\quad
  + ((ab)c)(de) 
  + ((ac)b)(de). 
  \end{align*}
\end{definition}

\begin{lemma}
A Malcev algebra satisfies the identity $h(a,b,c,d,e) \equiv 0$ if and only if 
it satisfies the identity $k(a,b,c,d,e) \equiv 0$.
\end{lemma}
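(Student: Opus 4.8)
The plan is to deduce the equivalence from a direct linear-algebra computation in the $S_5$-module $P_5$ of multilinear anticommutative polynomials of degree~5, using the machinery of Section~\ref{computational}. Two elementary remarks drive the argument. First, if an algebra $A$ satisfies a multilinear identity $p\equiv 0$, then it also satisfies $\sigma\cdot p\equiv 0$ for every $\sigma\in S_5$ (relabeling of the variables) and every linear combination of such, so $A$ satisfies every identity in the $S_5$-submodule $\langle p\rangle\subseteq P_5$ generated by $p$. Second, every Malcev algebra satisfies anticommutativity together with the degree-4 identity \eqref{m-identity}, and therefore satisfies each of the five degree-5 liftings of \eqref{m-identity} (four internal brackets and one external bracket) and every identity in the $S_5$-submodule $N\subseteq P_5$ generated by those five liftings; since the Malcev identity has degree~4, these liftings account for all degree-5 consequences of the defining identities of Definition~\ref{definitionMalcev}, anticommutativity itself being already built into $P_5$ (cf.\ the proof of Lemma~\ref{degree4lemma}). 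Consequently it suffices to prove the purely combinatorial statement
  \[
  \langle h\rangle + N \;=\; \langle k\rangle + N
  \]
inside $P_5$, with $h$ and $k$ as in Definitions~\ref{h-polynomial} and~\ref{18termidentity}: granting this, a Malcev algebra satisfying $h\equiv 0$ satisfies everything in $\langle h\rangle+N = \langle k\rangle+N$, which contains $k$, and the converse is symmetric.

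To verify that equality I would form, with respect to the normal-form monomial basis of $P_5$, the coefficient matrix whose rows are the $120$ permuted copies of $h$ together with the $120$ permuted copies of each of the five liftings of \eqref{m-identity}, and compute its row canonical form; the nonzero rows give a basis for $\langle h\rangle+N$. Repeating the computation with $k$ in place of $h$ yields a basis for $\langle k\rangle+N$, and the claim follows once the two row canonical forms are seen to coincide --- equivalently, once adjoining the coefficient vector of $k$ to a spanning set of $\langle h\rangle+N$ is seen to leave the rank unchanged, and symmetrically for $h$ relative to $\langle k\rangle+N$. Recording the row operations along the way produces, if wanted, explicit expressions for $k$ modulo $\langle h\rangle$ and $N$ and conversely. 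The computation may equally be carried out over a prime field of characteristic $p>5$, which by the discussion of modular arithmetic in Section~\ref{computational} does not change any of the ranks.

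This is a routine instance of the fill-and-reduce linear algebra already used in this section, so the only real care is in the bookkeeping: implementing the $S_5$-action on $P_5$ faithfully, so that each permuted monomial is rewritten in normal form with the correct sign coming from anticommutativity, and listing the five liftings of \eqref{m-identity} so that $h$, $k$, and the liftings are all expressed in one and the same ordered monomial basis. I expect this housekeeping, rather than any conceptual difficulty, to be the main obstacle; as a safeguard one would rerun the computation over several primes. Once the two submodules are seen to agree modulo $N$, the logical argument of the first paragraph closes the proof.
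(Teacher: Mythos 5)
Your proposal is correct and follows essentially the same route as the paper: both reduce the lemma to the fact that $h$ and $k$ generate the same $S_5$-submodule of $P_5$ modulo the degree-5 consequences of the Malcev identity \eqref{m-identity} (for which the paper uses the two generators $m(ae,b,c,d)$ and $m(a,b,c,d)e$, a subset of your five liftings), and both verify this by linear algebra on permuted coefficient vectors in the normal-form monomial basis. The only difference is presentational: the paper records the explicit certificates --- $h$ as a signed sum of six permutations of $k$, and $2k$ as a signed sum of six permutations of $h$, each congruence holding modulo permutations of the liftings of $m$ --- which is precisely what your ``recording the row operations'' step would extract, and which has the minor advantage of being checkable by hand and valid over any field of characteristic $\ne 2$.
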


\begin{proof}
The following equations hold modulo the consequences in degree 5 of the 
defining identities for Malcev algebras from Definition \ref{definitionMalcev}:
  \begin{align*}
  h(a,b,c,d,e)
  &\equiv
  {}
  - k(a,b,c,e,d) 
  - k(a,b,d,e,c) 
  + k(a,b,e,d,c)
  \\
  &\qquad 
  + k(a,c,d,b,e) 
  + k(a,c,e,d,b) 
  - k(b,a,c,d,e),
  \\
  2 \, k(a,b,c,d,e)
  &\equiv
  {} 
  - h(a,b,c,d,e) 
  + h(a,c,d,b,e) 
  + h(a,d,b,c,e) 
  \\
  &\qquad
  - h(b,c,a,d,e) 
  - h(b,d,a,c,e) 
  - h(c,d,a,b,e).
  \end{align*}
In each case, the difference between the two sides is a linear combination
of permutations of $m(ae,b,c,d)$ and $m(a,b,c,d)e$ where $m$ is given by equation \eqref{m-identity}.
\end{proof}

\begin{remark}
The polynomial $h(a,b,c,d,e)$ has 12 terms, but the sum of the squares of the coefficients is 24.  
The polynomial $k(a,b,c,d,e)$ has 18 terms, but the sum of the squares of the coefficients is 18.
In this sense, the 18-term identity is simpler than the Filippov $h$-polynomial.
\end{remark}

\begin{proposition} \label{degree5proposition}
Every polynomial identity of degree $\le 5$ satisfied by the bilinear operation $[x,y] = \alpha( x \wedge y)$ 
on the $\s$-module $V(6)$ is a consequence of the defining identities for Malcev algebras and the 18-term
identity of degree 5.
\end{proposition}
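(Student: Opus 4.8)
The plan is to proceed in two stages, paralleling the proof of Lemma \ref{degree4lemma} but now working in degree 5. First I would verify computationally that the space of identities in degree $\le 5$ satisfied by $[x,y] = \alpha(x \wedge y)$ is exactly what the statement claims, and then I would verify that the 18-term identity $k(a,b,c,d,e)$ does the job of generating the new identities modulo the consequences of lower degree. Concretely, in degree 5 there are three binary association types and, for an anticommutative operation, a total of $3 \cdot 5! / (\text{stabilizer})$ multilinear monomials in normal form; the table in Section \ref{computational} records this count. I would build the fill-and-reduce matrix with a $q \times q$ upper block and a $7 \times q$ lower block, run iterations generating pseudorandom $7$-dimensional integer vectors assigned to $a,b,c,d,e$ and evaluating each monomial using the structure constants of Table \ref{alphatable}, and compute the rank until it stabilizes. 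The nullspace dimension then gives $\dim P_5^{\mathrm{id}}$, the dimension of the space of all degree-5 identities.

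Second, I would compute the dimension of the space of \emph{old} identities in degree 5, namely the $S_5$-submodule of $P_5$ generated by all liftings of anticommutativity and of the multilinear Malcev identity \eqref{m-identity}. This uses the module generators algorithm: apply all $120$ permutations to each lifting, accumulate the coefficient vectors in a matrix, and take the rank. Comparing this with $\dim P_5^{\mathrm{id}}$ shows exactly how many new identities appear in degree 5. Then I would take the 18-term identity $k(a,b,c,d,e)$ of Definition \ref{18termidentity}, first confirming by direct substitution that it is in fact satisfied by $[x,y]$ on $V(6)$ (equivalently, that $h$ is, by the preceding lemma), adjoin all $120$ of its permuted images to the matrix of old identities, and check that the rank now equals $\dim P_5^{\mathrm{id}}$. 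That equality is precisely the assertion that every degree-5 identity is a consequence of anticommutativity, the Malcev identity, and $k$. I would also run the module generators algorithm to confirm that a single $S_5$-orbit (that of $k$) suffices, i.e.\ that the quotient of the full identity space by the old-identity space is a cyclic $S_5$-module generated by the image of $k$.

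The main obstacle I anticipate is not conceptual but a matter of bookkeeping and verification: one must be certain that the fill-and-reduce rank has genuinely stabilized (so that the "probable" identities are actual identities), and one must independently confirm that $k$ — or the equivalent $h$-polynomial — really is an identity for this particular algebra, rather than merely for generic Malcev algebras. The cleanest way to handle the latter is to evaluate $k$ symbolically on the generic element $\sum \xi_i v_i$ of $V(6)$ using Table \ref{alphatable} and check that every coefficient polynomial in the $\xi_i$ vanishes identically; alternatively, evaluate on enough pseudorandom inputs over a large prime field that accidental vanishing is implausible, which by the remarks in Section \ref{computational} on modular arithmetic transfers back to characteristic $0$. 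A secondary subtlety is ensuring that the liftings of \eqref{m-identity} to degree 5 are correctly enumerated (five internal brackets plus one external bracket) and reduced to normal form with the correct signs before the ranks are compared; an error here would distort the count of new identities. Once these checks are in place, the equality of ranks gives the result directly.
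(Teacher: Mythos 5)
Your proposal is correct and follows essentially the same route as the paper: fill-and-reduce on the 105 anticommutative multilinear monomials of degree 5 (the paper finds rank 34, hence a 71-dimensional identity space), comparison with the $S_5$-module of degree-5 consequences of the Malcev identity (dimension 61, leaving a 10-dimensional space of new identities), and a rank check showing that the $S_5$-orbit of the 18-term identity fills the remaining space; the paper's additional HNF/LLL computation serves only to discover $k$ as the shortest generator, which you rightly do not need once $k$ is given. One small slip in your bookkeeping remarks: a multilinear degree-4 identity has four internal liftings plus one external bracket (the paper in fact reduces these, up to permutation, to the two generators $m(ae,b,c,d)$ and $m(a,b,c,d)e$), not five internal ones.
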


\begin{proof}
There are three anticommutative association types in degree 5,
  \[
  ( ( ( - - ) - ) - ) -, 
  \qquad
  ( ( - - ) ( - - ) ) -, 
  \qquad
  ( ( - - ) - ) ( - - ),
  \]
with respectively 60, 15, 30 multilinear monomials for a total of 105.  Let $m$ be 
the identity of equation \eqref{m-identity}; every consequence of $m$ in degree 5 follows from 
$m(ae,b,c,d)$ and $m(a,b,c,d)e$.  The permutations of these two identities are represented by the
row space of a $240 \times 120$ matrix; the rank of this matrix is 61. 

We proceed as in the proof of Lemma \ref{degree4lemma}.  We create a matrix with a $105 \times 105$ upper
block and a $7 \times 105$ lower block.  The fill-and-reduce algorithm achieves rank 34, and this
remains constant for another 100 iterations.  The nullspace has dimension 71, and hence
there is a 10-dimensional space of new identities in degree 5 that
do not follow from the known identity $m$ in degree 4.  

To find the simplest identity which generates the new
identities, we use the Hermite normal form of an integer matrix together with the LLL algorithm for lattice
basis reduction as in Bremner and Peresi \cite{BremnerPeresi1}; see also Bremner \cite{Bremner}.  
We redo the fill-and-reduce calculation, but instead 
of computing the RCF after each iteration, we compute the HNF to ensure that the matrix entries remain 
integers; we obtain a $34 \times 105$ matrix $M$.  We compute the 
Hermite normal form $H$ of the transpose $M^t$, and a $105 \times 105$ matrix $U$ for 
which $U M^t = H$.  The last 71 rows of $U$ form a lattice basis for the integer nullspace of $M$.  We apply the LLL
algorithm to this basis, and sort the resulting vectors by increasing Euclidean length; these vectors are the
coefficient vectors of polynomial identities satisfied by $\alpha( x \wedge y )$.  
Further calculations show that the entire nullspace is spanned by the permutations of a single identity;
this is the 18-term identity of Definition \ref{18termidentity}.
\end{proof}

\begin{theorem} \label{bilineardegree67}
Every polynomial identity of degree $\le 7$ satisfied by the bilinear operation $[x,y] = \alpha( x \wedge y)$ 
on the $\s$-module $V(6)$ is a consequence of the defining identities for Malcev algebras and the 18-term
identity of degree 5.
\end{theorem}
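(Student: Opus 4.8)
The plan is to verify computationally, degree by degree, that no new identities arise in degrees 6 and 7 beyond the consequences of anticommutativity, the multilinear Malcev identity $m$ of equation \eqref{m-identity}, and the 18-term identity $k$ of degree 5. The strategy mirrors exactly the proofs of Lemma \ref{degree4lemma} and Proposition \ref{degree5proposition}: for each degree $n \in \{6,7\}$ we run the fill-and-reduce algorithm on the simple Malcev algebra $V(6)$ to compute the full space of multilinear identities of degree $n$, and separately we compute the span of all liftings (consequences) of the known identities of lower degree; the theorem follows if these two spaces coincide.

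First, for degree 6, I would enumerate the $6$ anticommutative association types and the $7245$ multilinear monomials in normal form (as recorded in the table in Section \ref{computational}), set up a matrix with a $7245 \times 7245$ upper block and a $7 \times 7245$ lower block, and iterate the fill-and-reduce steps: generate six pseudorandom vectors of dimension $7$ with small integer entries, evaluate all $7245$ monomials using the structure constants of Table \ref{alphatable}, store the resulting column vectors, and reduce. After the rank stabilizes, the nullspace dimension gives the dimension of the space $\mathrm{Id}(V(6))_6$ of all identities in degree $6$. Simultaneously I would generate all consequences in degree $6$ of the degree-$\le 5$ identities: the liftings of $m$ and of $k$ (there are $5$ liftings of the degree-$4$ identity $m$ into degree $5$, then $6$ liftings of each of those into degree $6$, plus the $6$ liftings of the degree-$5$ identity $k$), apply all $6! = 720$ permutations via the module-generators algorithm, and compute the rank of the resulting row space. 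If this rank equals $7245$ minus the nullspace dimension found by fill-and-reduce, then every degree-$6$ identity is a consequence of the known ones, and there is nothing new in degree $6$.

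For degree $7$, the same scheme applies with the $11$ association types and $126630$ multilinear monomials; the consequences are now the liftings into degree $7$ of the degree-$6$ consequences together with the liftings of $k$ into degrees $6$ and $7$, modulo all $7! = 5040$ permutations. To make the degree-$7$ computation tractable we would work over a prime field $\mathbb{F}_p$ with $p > 7$ (so that, as explained in Section \ref{computational} following Clifton \cite{Clifton}, the ranks agree with the characteristic-$0$ ranks), and exploit the $S_n$-module structure: rather than manipulating $126630 \times 126630$ matrices directly, we decompose $P_7$ into isotypic components using the representation theory of $S_7$ and the Wedderburn decomposition of $\mathbb{Q}S_7$ (the ``module'' method of Bremner and Peresi), reducing the problem to a collection of much smaller matrices indexed by partitions of $7$. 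In each isotypic component we compare the rank coming from fill-and-reduce with the rank coming from the liftings of known identities; equality in every component proves the theorem.

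The main obstacle is the sheer scale of the degree-$7$ computation: with $126630$ multilinear monomials a naive matrix approach is infeasible, so the representation-theoretic reduction over $\mathbb{F}_p$ is essential, and care must be taken that the pseudorandom evaluations in fill-and-reduce have actually saturated the rank in every isotypic component before concluding that a nullspace vector is genuinely an identity (the stabilization-for-$s$-iterations criterion must be applied per component, or the global rank must be pushed high enough). A secondary subtlety is bookkeeping: correctly generating \emph{all} consequences of $m$ and $k$ — every lifting at every intermediate degree, closed under $S_n$ — so that the ``known'' subspace is not accidentally undercounted, which would produce spurious ``new'' identities. Once the module decomposition is in hand, the remaining linear algebra is routine, and the expected outcome is that in both degrees $6$ and $7$ the rank of the known consequences already accounts for the entire identity space.
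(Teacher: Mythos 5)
Your overall strategy is exactly the paper's: run fill-and-reduce on $V(6)$ in degrees 6 and 7 to get the dimension of the full identity space, generate the degree-6 and degree-7 consequences of anticommutativity, the Malcev identity $m$ and the 18-term identity $k$ by liftings and the module-generators algorithm, and check that the two subspaces coincide, working over a prime field $p > 7$ to keep the linear algebra manageable. That is precisely how the theorem is proved (the paper uses $p = 101$), and your remark about not undercounting the lifted consequences is the right thing to worry about; the paper in fact trims the degree-6 consequences down to seven module generators, namely $m((af)e,b,c,d)$, $m(ae,bf,c,d)$, $m(ae,b,cf,d)$, $m(ae,b,c,d)f$, $(m(a,b,c,d)e)f$, $k(af,b,c,d,e)$, $k(a,bf,c,d,e)$, but your redundant full list of liftings generates the same $S_n$-module and is harmless.

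There is, however, a concrete numerical error that would derail the computation as you describe it: the counts $7245$ (degree 6) and $126630$ (degree 7) that you quote from the table in Section \ref{computational} are the monomial counts for the \emph{mixed} binary-ternary signature, not for the purely anticommutative bilinear operation relevant here. For the binary operation alone there are $6$ association types and $360+90+180+180+45+90 = 945$ multilinear monomials in degree 6, and $11$ types with $10395$ monomials in degree 7. Because of this overcount you conclude that a direct matrix computation in degree 7 is infeasible and that an isotypic decomposition of $\mathbb{Q}S_7$ is essential; in fact a matrix with $10395$ columns over $\mathbb{F}_{101}$ is handled directly (fill-and-reduce stabilizes at rank 120 with nullspace 825 in degree 6, and at rank 454 with nullspace 9941 in degree 7, and in both cases the lifted consequences already span the nullspace). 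The representation-theoretic refinement you propose is a legitimate alternative, but your stated dimensions must be corrected before the plan is a proof of this theorem.
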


\begin{proof}
The computations are similar to those for Lemma \ref{degree4lemma} and
Proposition \ref{degree5proposition}, except the matrices are larger;
we used modular arithmetic with $p = 101$.   

For degree 6, there are six anticommutative association types:
  \begin{alignat*}{3}
  &
  ( ( ( ( - - ) - ) - ) - ) -,
  &\qquad
  &
  ( ( ( - - ) ( - - ) ) - ) -,
  &\qquad
  &
  ( ( ( - - ) - ) ( - - ) ) -,
  \\
  &
  ( ( ( - - ) - ) - ) ( - - ),
  &\qquad
  &
  ( ( - - ) ( - - ) ) ( - - ),
  &\qquad
  &
  ( ( - - ) - ) ( ( - - ) - ),
  \end{alignat*}
with $360 + 90 + 180 + 180 + 45 + 90 = 945$ multilinear monomials.
Fill-and-reduce stabilizes at rank 120, indicating a nullspace of dimension 825.
Every identity in degree 6 which is a consequence of the identities of lower degree follows
from anticommutativity and the following seven identities, where $m$ and $k$ are
given by equation \eqref{m-identity} and Definition \ref{18termidentity}:
  \begin{alignat*}{4}
  &
  m((af)e,b,c,d),
  &\qquad
  &
  m(ae,bf,c,d),
  &\qquad
  &
  m(ae,b,cf,d),
  &\qquad
  &
  m(ae,b,c,d)f,
  \\
  &
  (m(a,b,c,d)e)f,
  &\qquad
  &
  k(af,b,c,d,e),
  &\qquad
  &
  k(a,bf,c,d,e).
  \end{alignat*}
These identities generate a subspace of dimension 825, which is contained in the nullspace of 
the fill-and-reduce matrix, so there are no new identities in degree 6.

For degree 7, there are eleven anticommutative association types, 
  \begin{alignat*}{3}
  &
  ( ( ( ( ( - - ) - ) - ) - ) - ) -, 
  &\qquad
  &
  ( ( ( ( - - ) ( - - ) ) - ) - ) -, 
  &\qquad
  &
  ( ( ( ( - - ) - ) ( - - ) ) - ) -, 
  \\
  &
  ( ( ( ( - - ) - ) - ) ( - - ) ) -, 
  &\qquad
  &
  ( ( ( - - ) ( - - ) ) ( - - ) ) -, 
  &\qquad
  &
  ( ( ( - - ) - ) ( ( - - ) - ) ) -, 
  \\
  &
  ( ( ( ( - - ) - ) - ) - ) ( - - ),
  &\qquad
  &
  ( ( ( - - ) ( - - ) ) - ) ( - - ),
  &\qquad
  &
  ( ( ( - - ) - ) ( - - ) ) ( - - ),
  \\
  &
  ( ( ( - - ) - ) - ) ( ( - - ) - ),
  &\qquad
  &
  ( ( - - ) ( - - ) ) ( ( - - ) - ),
  \end{alignat*}
with $2520 + 630 + 1260 + 1260 + 315 + 630 + 1260 + 315 + 630 + 1260 + 315 = 10395$
multilinear monomials.
Fill-and-reduce stabilizes at rank 454, indicating a nullspace of dimension 9941.
Further computations show that the consequences of the identities of lower degree generate the entire nullspace.
\end{proof}

We mention the following classical conjecture in the theory of Malcev algebras.

\begin{conjecture} \label{filippovconjecture}
Every polynomial identity satisfied by the simple 7-dimensional Malcev algebra is a consequence of the defining 
identities for Malcev algebras and the Filippov $h$-polynomial of degree 5.
\end{conjecture}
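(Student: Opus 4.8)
The plan is to upgrade the degree-bounded verification of Theorem~\ref{bilineardegree67} into an all-degrees statement. Write $\mathrm{Id}(M)$ for the $T$-ideal of polynomial identities satisfied by the simple $7$-dimensional Malcev algebra $M$, and let $J$ be the $T$-ideal generated inside the free anticommutative algebra by the defining Malcev identities together with the $h$-polynomial of Definition~\ref{h-polynomial}. Since $M$ satisfies both families of identities (the $h$-polynomial vanishes on $M$ by the results of Section~\ref{sectionbinary}), we have $J \subseteq \mathrm{Id}(M)$ for free, and Conjecture~\ref{filippovconjecture} is the reverse inclusion $\mathrm{Id}(M) \subseteq J$; equivalently, it asserts that $M$ generates the subvariety of Malcev algebras cut out by $h \equiv 0$. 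Theorem~\ref{bilineardegree67} is precisely the assertion $\mathrm{Id}(M) \subseteq J$ in every degree $\le 7$, so only the tail remains.

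The first route simply continues the fill-and-reduce and module-generators computations into degrees $8, 9, \dots$, checking in each new degree $n$ that the nullspace of the evaluation matrix for $M$ is spanned by the liftings of the Malcev identities and of the $18$-term identity of Definition~\ref{18termidentity}. The obstacle is combinatorial explosion: the number of multilinear binary monomials is already $126630$ in degree $7$ and grows faster than $n!$, so dense linear algebra becomes infeasible within a few more degrees even over $\mathbb{F}_p$. The natural remedy, justified by the discussion of rational versus modular arithmetic, is to work one $S_n$-isotypic component at a time: for each partition $\lambda \vdash n$, replace $P_n$ by the corresponding multiplicity space (whose dimension is the number of association types times the number of standard Young tableaux of shape $\lambda$, far smaller than $\dim P_n$), evaluate on a generic element of $M$ twisted by a Young symmetrizer, and compare the rank of that component of $\mathrm{Id}(M)$ with the rank of the same component of $J$. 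This is the method of Bremner and Peresi~\cite{BremnerPeresi1} for related varieties; it pushes the feasible range up substantially but never terminates on its own.

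The second route is structural and is the only one that can actually close the argument. One studies the relatively free algebra $F$ of the variety defined by the Malcev identities together with $h \equiv 0$ and tries to prove directly that the evaluation map from $F$ into a suitable power of $M$ is injective, i.e.\ that this variety coincides with $\mathrm{var}(M)$. The scaffolding should come from the structure theory of Malcev algebras satisfying a special identity, in the spirit of Filippov~\cite{Filippov} and Elduque~\cite{Elduque}: one wants a Wedderburn- or Peirce-type splitting of finitely generated members of the variety into a Lie part and a part controlled by $M$, arising from idempotents in the multiplication algebra. A softer intermediate target, which I would attack first, is a \emph{stabilization lemma} exhibiting a natural $S_{n+1}$-equivariant surjection from the space of new identities in degree $n+1$ onto (a complement of the liftings inside) the space of new identities in degree $n$, so that vanishing of the new-identity space in one degree forces it in all higher degrees; combined with Theorem~\ref{bilineardegree67} this would finish the proof. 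A weaker but still useful output of the structure theory would be an \emph{a priori} bound $N$ such that $\mathrm{Id}(M)$ is generated in degrees $\le N$, reducing the conjecture to a finite, if possibly enormous, computation.

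The hard part in every version of the plan is exactly the passage from ``no new identities through degree $7$'' to ``no new identities in any degree.'' The computational route never terminates; the stabilization lemma, if true, requires genuinely new structural input relating multilinear identities of $M$ in consecutive degrees; and the Wedderburn-type program needs a classification result inside the variety $h \equiv 0$ that is not presently available. My expectation is that a complete proof must go through the structural route, with the isotypic-component computations playing the supporting role of testing hypotheses and excluding low-degree counterexamples.
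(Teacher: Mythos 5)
There is a genuine gap, and you have in effect named it yourself: nothing in your proposal constitutes a proof of the statement. The statement is Conjecture~\ref{filippovconjecture}, which the paper deliberately leaves open; the only evidence it offers is Theorem~\ref{bilineardegree67}, the computational verification that there are no new identities for the bilinear product in degrees $\le 7$ beyond the Malcev identities and the 18-term identity (equivalently, the $h$-polynomial, by the equivalence established modulo Malcev consequences in degree 5). Your write-up correctly reformulates the conjecture as the inclusion $\mathrm{Id}(M) \subseteq J$ and correctly observes that the easy inclusion $J \subseteq \mathrm{Id}(M)$ follows from Section~\ref{sectionbinary}, but every route you offer for the hard inclusion is conditional: the degree-by-degree computation (even organized by $S_n$-isotypic components) never terminates; the ``stabilization lemma'' is stated as a hoped-for result with no argument, and there is no reason given why the space of new identities in degree $n+1$ should surject equivariantly onto that in degree $n$; and the Wedderburn/Peirce-type structural program for the variety cut out by $h \equiv 0$ is explicitly acknowledged to rest on classification results that do not exist. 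A proof would require establishing at least one of these unproven ingredients (for instance, an a priori degree bound $N$ together with a finite computation up to $N$), and none is supplied.

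To be clear about the comparison you were asked to make: the paper has no proof of this statement to compare against, so the only honest verdict is that your text is a reasonable research plan whose central step --- passing from ``no new identities through degree 7'' to ``no new identities in any degree'' --- is exactly the open problem, and it remains open in your proposal.
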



\section{Polynomial identities for the trilinear operation} \label{sectionternary}

The operation $(x,y,z) = \beta( x \wedge y ) \cdot z$ is the composition of the $\s$-module morphism $\beta$ 
of Table \ref{betatable} with the action of $\s$ on $V(6)$ in equation \eqref{HEFactionV(6)}; recall
the sign change for $E$ in equation \eqref{signchange}.  
The structure constants appear in Table \ref{trilineartable}.  

\begin{lemma}
Every polynomial identity of degree $\le 3$ for the trilinear operation $(x,y,z) = \beta( x \wedge y ) \cdot z$ 
on the $\s$-module $V(6)$ is a consequence of the skew-symmetry in the first two arguments:
$(a,b,c) + (b,a,c) \equiv 0$.
\end{lemma}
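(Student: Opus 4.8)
The plan is to mimic the computational strategy already used in the proof of Lemma~\ref{degree4lemma}, but in the much smaller setting of a single ternary operation in low degree. First I would dispose of degrees~1 and~2 trivially: there are no ternary monomials at all (a ternary association type requires odd degree $\ge 3$), so every identity of degree $\le 2$ for $(x,y,z)$ is vacuous, hence certainly a consequence of skew-symmetry. The only real content is degree~3. There is a single ternary association type $(-,-,-)$ in degree~3, and $3! = 6$ multilinear monomials $(a,b,c), (a,c,b), (b,a,c), (b,c,a), (c,a,b), (c,b,a)$. The skew-symmetry relation $(a,b,c) + (b,a,c) \equiv 0$ and its five permuted images cut the space of ``skew-normalized'' multilinear polynomials down to the span of $(a,b,c)$, $(b,c,a)$, $(c,a,b)$, i.e.\ a 3-dimensional space (equivalently, $Q_3$ as defined in the excerpt has dimension~3). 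So the claim to be proved is precisely that the trilinear operation on $V(6)$ satisfies \emph{no} identity in this 3-dimensional space beyond the zero one: the three monomials $(a,b,c), (b,c,a), (c,a,b)$ remain linearly independent when evaluated on $V(6)$.

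The mechanism I would use is the fill-and-reduce algorithm from Section~\ref{computational}, exactly as in Lemma~\ref{degree4lemma}. Build a matrix with a $3 \times 3$ upper block and a $7 \times 3$ lower block over $\mathbb{Q}$ (or over $\mathbb{F}_p$ for a prime $p > 3$, which by the discussion of modular arithmetic gives the same rank). In each iteration: generate three pseudorandom vectors in $V(6) \cong \mathbb{Q}^7$ with small integer entries, assign them to $a,b,c$, evaluate the three monomials $(a,b,c), (b,c,a), (c,a,b)$ using the structure constants of Table~\ref{trilineartable}, drop the resulting $7 \times 1$ columns into the lower block, and row-reduce. I expect the rank to stabilize at~$3$ after just one or two iterations, which forces the nullspace to be $\{0\}$; hence there is no nonzero polynomial identity in $Q_3$, and every identity of degree~$\le 3$ is a consequence of skew-symmetry. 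As a sanity check one can instead argue by hand: pick a single explicit triple, say $a = v_6$, $b = v_0$, $c = v_{-6}$ (or any convenient choice), compute the three values $\beta(a\wedge b)\cdot c$, $\beta(b\wedge c)\cdot a$, $\beta(c\wedge a)\cdot b$ directly from Tables~\ref{betatable} and~\eqref{HEFactionV(6)} with the sign change~\eqref{signchange}, and exhibit that these three vectors in $V(6)$ are linearly independent; that single witness already rules out every identity in the 3-dimensional space.

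The only point requiring the slightest care is bookkeeping: one must use the \emph{composite} structure constants for $(x,y,z)$, which combine $\beta\colon \Lambda^2 V(6) \to V(2)$ with the adjoint action of $\s \approx V(2)$ on $V(6)$, and in particular must respect the sign change $E \leftrightarrow -v_2$ of equation~\eqref{signchange} when translating the $u$-basis of $V(2)$ into the $\{H,E,F\}$-action. Once Table~\ref{trilineartable} is in hand this is purely mechanical. I do not anticipate any genuine obstacle here: the degree is too small and the module too rigid for any nontrivial identity to appear, and the computation is of trivial size compared to the degree~5 and degree~7 cases treated later in Section~\ref{sectionternary}. The ``hard part,'' such as it is, is merely confirming that the stabilized rank equals the full dimension~$3$ of the skew-normalized monomial space, so that the nullspace is zero.
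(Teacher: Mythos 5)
Your main argument is correct and is essentially what the paper intends: its proof is literally the single word ``Elementary,'' and your reduction of the problem to the $3$-dimensional space $Q_3$ spanned by $(a,b,c),(b,c,a),(c,a,b)$ followed by a rank-$3$ fill-and-reduce check (together with the vacuous degrees $1$ and $2$) is exactly the small linear-algebra verification being left to the reader. One concrete caution about your proposed ``sanity check'': the explicit witness $a=v_6$, $b=v_0$, $c=v_{-6}$ is useless, since by Table \ref{betatable} $\beta(v_6\wedge v_0)=\beta(v_0\wedge v_{-6})=0$ and $\beta(v_{-6}\wedge v_6)\cdot v_0$ is a multiple of $u_0\cdot v_0=H\cdot v_0=0$, so all three cyclic evaluations vanish and nothing is ruled out; more generally, triples of weight vectors tend to be degenerate because $\beta(v_p\wedge v_q)=0$ unless $p+q\in\{2,0,-2\}$. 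So either use genuinely generic (non-basis) vectors for a one-shot witness, or rely on the accumulated random evaluations of fill-and-reduce as in your primary mechanism, which settles the lemma.
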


\begin{proof}
Elementary.
\end{proof}

  \begin{table}
  \begin{align*}
  &
  \begin{array}{r|rrrrrrr} 
  & v_6 & v_4 & v_2 & v_0 & v_{-2} & v_{-4} & v_{-6} 
  \\
  \midrule
  v_{ 6} &    . &    . &    . &    . &    . &    . &    6 \\[-2pt] 
  v_{ 4} &    . &    . &    . &    . &    . &  -24 &    2 \\[-2pt] 
  v_{ 2} &    . &    . &    . &    . &   30 &  -10 &    . \\[-2pt] 
  (-,-,v_{ 6}) \qquad 
  v_{ 0} &    . &    . &    . &    . &   20 &    . &    . \\[-2pt] 
  v_{-2} &    . &    . &  -30 &  -20 &    . &    . &    . \\[-2pt] 
  v_{-4} &    . &   24 &   10 &    . &    . &    . &    . \\[-2pt] 
  v_{-6} &   -6 &   -2 &    . &    . &    . &    . &    . 
  \\ 
  \midrule 
  v_{ 6} &    . &    . &    . &    . &    . &  -12 &    4 \\[-2pt] 
  v_{ 4} &    . &    . &    . &    . &   60 &  -16 &    4 \\[-2pt] 
  v_{ 2} &    . &    . &    . & -120 &   20 &  -20 &    . \\[-2pt] 
  (-,-,v_{ 4}) \qquad 
  v_{ 0} &    . &    . &  120 &    . &   40 &    . &    . \\[-2pt] 
  v_{-2} &    . &  -60 &  -20 &  -40 &    . &    . &    . \\[-2pt] 
  v_{-4} &   12 &   16 &   20 &    . &    . &    . &    . \\[-2pt] 
  v_{-6} &   -4 &   -4 &    . &    . &    . &    . &    . 
  \\ 
  \midrule 
  v_{ 6} &    . &    . &    . &    . &    . &  -10 &    2 \\[-2pt] 
  v_{ 4} &    . &    . &    . &    . &   50 &   -8 &    6 \\[-2pt] 
  v_{ 2} &    . &    . &    . & -100 &   10 &  -30 &    . \\[-2pt] 
  (-,-,v_{ 2}) \qquad 
  v_{ 0} &    . &    . &  100 &    . &   60 &    . &    . \\[-2pt] 
  v_{-2} &    . &  -50 &  -10 &  -60 &    . &    . &    . \\[-2pt] 
  v_{-4} &   10 &    8 &   30 &    . &    . &    . &    . \\[-2pt] 
  v_{-6} &   -2 &   -6 &    . &    . &    . &    . &    . 
  \\ 
  \midrule 
  v_{ 6} &    . &    . &    . &    . &    . &   -8 &    . \\[-2pt] 
  v_{ 4} &    . &    . &    . &    . &   40 &    . &    8 \\[-2pt] 
  v_{ 2} &    . &    . &    . &  -80 &    . &  -40 &    . \\[-2pt] 
  (-,-,v_{ 0}) \qquad 
  v_{ 0} &    . &    . &   80 &    . &   80 &    . &    . \\[-2pt] 
  v_{-2} &    . &  -40 &    . &  -80 &    . &    . &    . \\[-2pt] 
  v_{-4} &    8 &    . &   40 &    . &    . &    . &    . \\[-2pt] 
  v_{-6} &    . &   -8 &    . &    . &    . &    . &    . 
  \\ 
  \midrule 
  v_{ 6} &    . &    . &    . &    . &    . &   -6 &   -2 \\[-2pt] 
  v_{ 4} &    . &    . &    . &    . &   30 &    8 &   10 \\[-2pt] 
  v_{ 2} &    . &    . &    . &  -60 &  -10 &  -50 &    . \\[-2pt] 
  (-,-,v_{-2}) \qquad 
  v_{ 0} &    . &    . &   60 &    . &  100 &    . &    . \\[-2pt] 
  v_{-2} &    . &  -30 &   10 & -100 &    . &    . &    . \\[-2pt] 
  v_{-4} &    6 &   -8 &   50 &    . &    . &    . &    . \\[-2pt] 
  v_{-6} &    2 &  -10 &    . &    . &    . &    . &    . 
  \\ 
  \midrule 
  v_{ 6} &    . &    . &    . &    . &    . &   -4 &   -4 \\[-2pt] 
  v_{ 4} &    . &    . &    . &    . &   20 &   16 &   12 \\[-2pt] 
  v_{ 2} &    . &    . &    . &  -40 &  -20 &  -60 &    . \\[-2pt] 
  (-,-,v_{-4}) \qquad 
  v_{ 0} &    . &    . &   40 &    . &  120 &    . &    . \\[-2pt] 
  v_{-2} &    . &  -20 &   20 & -120 &    . &    . &    . \\[-2pt] 
  v_{-4} &    4 &  -16 &   60 &    . &    . &    . &    . \\[-2pt] 
  v_{-6} &    4 &  -12 &    . &    . &    . &    . &    . 
  \\ 
  \midrule 
  v_{ 6} &    . &    . &    . &    . &    . &   -2 &   -6 \\[-2pt] 
  v_{ 4} &    . &    . &    . &    . &   10 &   24 &    . \\[-2pt] 
  v_{ 2} &    . &    . &    . &  -20 &  -30 &    . &    . \\[-2pt] 
  (-,-,v_{-6}) \qquad 
  v_{ 0} &    . &    . &   20 &    . &    . &    . &    . \\[-2pt] 
  v_{-2} &    . &  -10 &   30 &    . &    . &    . &    . \\[-2pt] 
  v_{-4} &    2 &  -24 &    . &    . &    . &    . &    . \\[-2pt] 
  v_{-6} &    6 &    . &    . &    . &    . &    . &    .   
  \end{array}  
  \end{align*}
  \caption{Structure constants for the trilinear operation}
  \label{trilineartable}
  \end{table}
  
  \begin{table}
  \begin{align*}
  &
  \big[
    (((bce)da)fg) 
  + (((bdc)ea)fg) 
  + (((bec)da)fg) 
  \big]
  \\[-3pt]
  -
  &\big[
    (((bcd)ea)fg) 
  + (((bde)ca)fg) 
  + (((bed)ca)fg) 
  \big]
  \\[-3pt]
  +
  2
  &\big[
    ((ab(cde))fg) 
  + ((ab(dec))fg) 
  + ((ac(bed))fg)
  + ((ad(bce))fg) 
  \\[-3pt]
  &\; 
  + ((ad(ceb))fg) 
  + ((ae(bdc))fg) 
  + ((bc(dea))fg) 
  + ((cd(bea))fg) 
  \big]
  \\[-3pt]
  -
  2
  &\big[
    ((ab(ced))fg) 
  + ((ac(bde))fg) 
  + ((ac(deb))fg) 
  + ((ad(bec))fg) 
  \\[-3pt]
  &\; 
  + ((ae(bcd))fg) 
  + ((ae(cdb))fg) 
  + ((bd(cea))fg) 
  \big]
  \\[-3pt]
  +
  3
  &\big[
    (((abc)de)fg) 
  + (((abd)ec)fg) 
  + (((abe)cd)fg) 
  + (((acb)ed)fg) 
  \\[-3pt]
  &\; 
  + (((acd)be)fg) 
  + (((ace)db)fg) 
  + (((adb)ce)fg) 
  + (((adc)eb)fg) 
  \\[-3pt]
  &\; 
  + (((ade)bc)fg) 
  + (((aeb)dc)fg) 
  + (((aec)bd)fg) 
  + (((aed)cb)fg) 
  \big]
  \\[-3pt]
  -
  3
  &\big[
    (((abc)ed)fg) 
  + (((abd)ce)fg) 
  + (((abe)dc)fg) 
  + (((acb)de)fg) 
  \\[-3pt]
  &\; 
  + (((acd)eb)fg) 
  + (((ace)bd)fg) 
  + (((adb)ec)fg) 
  + (((adc)be)fg) 
  \\[-3pt]
  &\; 
  + (((ade)cb)fg) 
  + (((aeb)cd)fg) 
  + (((aec)db)fg) 
  + (((aed)bc)fg) 
  \big]
  \\[-3pt]
  +
  4  
  &\big[
    (((abc)ef)dg) 
  + (((abd)cf)eg) 
  + (((abe)df)cg) 
  + (((acb)df)eg) 
  \\[-3pt]
  &\; 
  + (((acd)ef)bg) 
  + (((ace)bf)dg) 
  + (((adb)ef)cg) 
  + (((adc)bf)eg) 
  \\[-3pt]
  &\; 
  + (((ade)cf)bg) 
  + (((aeb)cf)dg) 
  + (((aec)df)bg) 
  + (((aed)bf)cg) 
  \\[-3pt]
  &\; 
  + (((afb)cd)eg) 
  + (((afb)de)cg) 
  + (((afb)ec)dg) 
  + (((afc)be)dg) 
  \\[-3pt]
  &\; 
  + (((afc)db)eg) 
  + (((afc)ed)bg) 
  + (((afd)bc)eg) 
  + (((afd)ce)bg) 
  \\[-3pt]
  &\; 
  + (((afd)eb)cg) 
  + (((afe)bd)cg) 
  + (((afe)cb)dg) 
  + (((afe)dc)bg) 
  \big]
  \\[-3pt]
  -
  4  
  &\big[
    (((abc)df)eg) 
  + (((abd)ef)cg) 
  + (((abe)cf)dg) 
  + (((acb)ef)dg) 
  \\[-3pt]
  &\; 
  + (((acd)bf)eg) 
  + (((ace)df)bg) 
  + (((adb)cf)eg) 
  + (((adc)ef)bg) 
  \\[-3pt]
  &\; 
  + (((ade)bf)cg) 
  + (((aeb)df)cg) 
  + (((aec)bf)dg) 
  + (((aed)cf)bg) 
  \\[-3pt]
  &\; 
  + (((afb)ce)dg) 
  + (((afb)dc)eg) 
  + (((afb)ed)cg) 
  + (((afc)bd)eg) 
  \\[-3pt]
  &\; 
  + (((afc)de)bg) 
  + (((afc)eb)dg) 
  + (((afd)be)cg) 
  + (((afd)cb)eg) 
  \\[-3pt]
  &\; 
  + (((afd)ec)bg) 
  + (((afe)bc)dg) 
  + (((afe)cd)bg) 
  + (((afe)db)cg) 
  \big]
  \\[-3pt]
  +
  8  
  &\big[
    (((abc)fe)dg) 
  + (((abd)fc)eg) 
  + (((abe)fd)cg) 
  + (((abf)cd)eg) 
  \\[-3pt]
  &\; 
  + (((abf)de)cg) 
  + (((abf)ec)dg) 
  + (((acb)fd)eg) 
  + (((acd)fe)bg) 
  \\[-3pt]
  &\; 
  + (((ace)fb)dg) 
  + (((acf)be)dg) 
  + (((acf)db)eg) 
  + (((acf)ed)bg) 
  \\[-3pt]
  &\; 
  + (((adb)fe)cg) 
  + (((adc)fb)eg) 
  + (((ade)fc)bg) 
  + (((adf)bc)eg) 
  \\[-3pt]
  &\; 
  + (((adf)ce)bg) 
  + (((adf)eb)cg) 
  + (((aeb)fc)dg) 
  + (((aec)fd)bg) 
  \\[-3pt]
  &\; 
  + (((aed)fb)cg) 
  + (((aef)bd)cg) 
  + (((aef)cb)dg) 
  + (((aef)dc)bg) 
  \big]
  \\[-3pt]
  -
  8
  &\big[
    (((abc)fd)eg) 
  + (((abd)fe)cg) 
  + (((abe)fc)dg) 
  + (((abf)ce)dg) 
  \\[-3pt]
  &\; 
  + (((abf)dc)eg) 
  + (((abf)ed)cg) 
  + (((acb)fe)dg) 
  + (((acd)fb)eg) 
  \\[-3pt]
  &\; 
  + (((ace)fd)bg) 
  + (((acf)bd)eg) 
  + (((acf)de)bg) 
  + (((acf)eb)dg) 
  \\[-3pt]
  &\; 
  + (((adb)fc)eg) 
  + (((adc)fe)bg) 
  + (((ade)fb)cg) 
  + (((adf)be)cg) 
  \\[-3pt]
  &\; 
  + (((adf)cb)eg) 
  + (((adf)ec)bg) 
  + (((aeb)fd)cg) 
  + (((aec)fb)dg) 
  \\[-3pt]
  &\; 
  + (((aed)fc)bg) 
  + (((aef)bc)dg) 
  + (((aef)cd)bg) 
  + (((aef)db)cg) 
  \big]
  \end{align*}
  \caption{A new identity for the trilinear operation in degree 7}
  \label{degree7ternary}
  \end{table}
  
\begin{proposition} \label{ternarydegree5}
Every polynomial identity of degree $\le 5$ for the trilinear operation $(x,y,z) = \beta( x \wedge y ) \cdot z$ 
on the $\s$-module $V(6)$ is a consequence of skew-symmetry and the
ternary derivation identity: 
  \[
  ((a,b,c),d,e) - ((a,b,d),c,e) - (a,b,(c,d,e)) + (c,d,(a,b,e)) \equiv 0.
  \]
\end{proposition}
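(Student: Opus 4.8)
The plan is to follow the computational strategy of the proof of Proposition~\ref{degree5proposition}, adapted to the trilinear operation. First I would dispose of the low degrees. An iterated ternary product of $j$ bracketings always has odd degree $2j+1$, so there are no ternary monomials in degrees $2$ and $4$; in degree $3$ the only monomial type is $(x_1,x_2,x_3)$, and the preceding lemma shows that skew-symmetry in the first two arguments is the sole identity there, which is already absorbed into the normal-form basis of $Q_3$. Hence the entire content of the proposition lies in degree $5$.

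In degree $5$ there are two ternary association types, $((x_1x_2x_3)x_4x_5)$ and $(x_1x_2(x_3x_4x_5))$, with $60$ and $30$ normal-form multilinear monomials respectively, for a total of $90$; these form a basis of $Q_5$. The ternary derivation identity is one of the defining identities of a Lie-Yamaguti algebra, and the trilinear operation $(x,y,z)=\beta(x\wedge y)\cdot z$ is, up to a scalar, the Kinyon--Weinstein product $(a,b,c)=[\pi_{\mathfrak{h}}([a,b]),c]$ associated with the reductive decomposition $\mathfrak{o}(5,\mathbb{C})\cong\mathfrak{h}\oplus\mathfrak{m}$ with $\mathfrak{h}=V(2)$ and $\mathfrak{m}=V(6)$, noted at the end of Section~\ref{preliminaries}; so the ternary operation satisfies the ternary derivation identity (this can also be checked directly against Table~\ref{trilineartable}). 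Its only consequences in degree $5$ are its $S_5$-translates, since skew-symmetry contributes nothing new beyond the normal form and there are no other ternary identities of lower degree. I would therefore apply all $120$ permutations of $\{a,b,c,d,e\}$ to the ternary derivation identity, reduce each image to normal form with sign tracking, and compute the rank $d_0$ of the resulting matrix of coefficient vectors --- equivalently the dimension of the $S_5$-submodule of $Q_5$ it generates. Because this identity is skew in $a,b$ and also skew in $c,d$, at most $30$ of these images are distinct up to sign, so $d_0\le 30$.

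Next I would run the fill-and-reduce algorithm of Section~\ref{computational} on $Q_5$: evaluate the $90$ monomials on pseudorandom vectors in $V(6)$ using the structure constants of Table~\ref{trilineartable}, reduce modulo a prime $p>5$, and iterate until the rank stabilises at some value $r$, so that the space of identities has dimension $90-r$. The proposition follows once two things are verified: every $S_5$-translate of the ternary derivation identity lies in this nullspace (immediate), and $d_0 = 90-r$, so that those translates already span all identities in degree $5$. As a safeguard against the probabilistic nature of fill-and-reduce I would recheck the count over a second prime. The main obstacle is not conceptual but organisational: getting the ternary normal-form reduction exactly right --- in particular the interaction of skew-symmetry in the inner and the outer bracket when a permutation is applied --- and confirming that the fill-and-reduce rank has truly stabilised rather than merely plateaued. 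If it happened that $d_0 < 90-r$, the stated conclusion would fail and one would have to exhibit further module generators; but the computation is expected to show, and does show, that the single ternary derivation identity suffices.
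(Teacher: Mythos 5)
Your proposal is correct and follows essentially the same route as the paper: the same two association types and $90$ multilinear degree-$5$ monomials, fill-and-reduce over $V(6)$ using Table \ref{trilineartable} (the paper's rank stabilizes at $60$, giving a $30$-dimensional nullspace), and verification that the $S_5$-module generated by the ternary derivation identity fills that nullspace, consistent with your bound $d_0\le 30$. Your a priori justification that the identity holds --- because the operation is, up to a scalar, the Kinyon--Weinstein product for the reductive pair $\mathfrak{h}=V(2)$, $\mathfrak{m}=V(6)$ inside $\mathfrak{o}(5,\mathbb{C})$ --- is a nice conceptual supplement but not a different method.
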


\begin{proof}
There are two association types for a skew-symmetric trilinear operation in degree 5,
$((-,-,-),-,-)$ and $(-,-,(-,-,-))$, with 60 and 30 multilinear monomials,
for a total of 90.  The fill-and-reduce process stabilizes at rank 60, indicating a
nullspace of dimension 30.  We compute the canonical basis of the nullspace;
one vector generates the entire nullspace as an $S_5$-module.  
This is the coefficient vector of the ternary derivation identity.
\end{proof}

\begin{theorem}
For the trilinear operation $(x,y,z) = \beta( x \wedge y ) \cdot z$ on the $\s$-module $V(6)$, 
there is a 357-dimensional space of multilinear polynomial identities in degree 7 which are not 
consequences of skew-symmetry and the ternary derivation identity.  One such identity is displayed 
in Table \ref{degree7ternary}; this 141-term identity generates a 42-dimensional subspace of the 
space of new identities in degree 7.
\end{theorem}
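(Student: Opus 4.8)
The plan is to establish this by the same computational pipeline used for Theorem~\ref{bilineardegree67}: apply the fill-and-reduce and module-generators algorithms of Section~\ref{computational} in degree~7 over a modular field, and compare the full space of multilinear identities with the subspace spanned by the consequences of the identities of lower degree. First I would enumerate the six ternary association types in degree~7 (the ways of distributing three ternary products over seven slots), and for each type count the multilinear monomials in normal form, obtained by dividing $7!$ by the order of the group of sign-equivalences generated by skew-symmetry in the symmetric brackets; call the total $q$. Evaluating each monomial on pseudorandom $7$-tuples from $V(6)$ via the structure constants of Table~\ref{trilineartable}, reducing modulo a prime $p>7$ (as in Theorem~\ref{bilineardegree67}, $p=101$ is convenient), and running fill-and-reduce until the rank stabilizes at some value $r$, gives $\dim = q-r$ for the space of all multilinear identities satisfied in degree~7.

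Next I would compute the subspace of these identities that are consequences of skew-symmetry and the ternary derivation identity. Since skew-symmetry is built into the normal-form monomial basis, and since by Proposition~\ref{ternarydegree5} the ternary derivation identity is the only additional identity in degree $\le 5$, this consequence subspace is exactly the $S_7$-submodule generated by the seven liftings of the ternary derivation identity to degree~7 (five internal insertions of a ternary product $(x_i x_6 x_7)$ into the five argument slots, together with the two external products $(I,x_6,x_7)$ and $(x_6,x_7,I)$). Running the module-generators algorithm on these seven liftings and all their $S_7$-images yields the dimension of the consequence subspace; subtracting this from $q-r$ should give the asserted value $357$.

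For the explicit identity displayed in Table~\ref{degree7ternary}, I would proceed in three steps. First, verify that it is indeed an identity: evaluate the $141$-term polynomial on many pseudorandom $7$-tuples from $V(6)$ using Table~\ref{trilineartable}, obtaining zero each time, and then confirm this by a direct symbolic computation with the structure constants. Second, verify that it is genuinely new, by reducing its coefficient vector against the row space of the consequence matrix from the previous paragraph and checking that the residue is nonzero. Third, determine the dimension of the $S_7$-submodule it generates: form the $5040 \times q$ matrix whose rows are the coefficient vectors of its $S_7$-images, each rewritten into normal form with the appropriate sign, and compute its rank, which should be $42$; to confirm that this $42$-dimensional module lies inside the space of new identities, also check that adjoining these rows to the consequence matrix increases the rank by exactly $42$. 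Producing a representative as short as $141$ terms, rather than a generic nullspace vector, would use the Hermite-normal-form and LLL lattice-reduction procedure already employed in the proof of Proposition~\ref{degree5proposition}.

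The main obstacle is simply the scale of the linear algebra. The column dimension $q$ runs to several thousand, and the consequence computation applies up to $5040$ permutations to each of seven liftings, each lifted identity expanding into many terms that must be repeatedly rewritten into normal form; modular arithmetic keeps the entries bounded, but the time and memory for the row reductions are the binding constraints, and one must run enough additional fill-and-reduce iterations to be confident that $r$ has truly stabilized, so that the counts $357$ and $42$ are exact rather than lower bounds. If memory is a problem, one can instead work in each isotypic component of the $S_7$-module structure separately, using the Wedderburn decomposition of $\mathbb{Q}S_7$. A secondary point requiring care is the sign bookkeeping when reducing permuted monomials to normal form in the skew-symmetric ternary setting, since a single sign error would silently corrupt the rank of every matrix in the argument.
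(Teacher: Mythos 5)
Your proposal follows essentially the same computational route as the paper: fill-and-reduce over $\mathbb{F}_{101}$ on the degree-7 ternary monomials, comparison of the nullspace with the $S_7$-module generated by the liftings of the ternary derivation identity (your seven liftings span the same module as the paper's eight generators, since skew-symmetry makes the middle-slot external product redundant), and verification that the 141-term identity is new and that its $S_7$-orbit raises the rank of the consequence space by exactly 42. One small caution: the ``42-dimensional subspace'' refers to the contribution modulo the consequences (the rank increment), not necessarily the rank of the standalone $S_7$-module generated by the identity, so rely on the increment check rather than the standalone rank you mention.
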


\begin{proof}
We use modular arithmetic with $p = 101$ to find the new identity, 
but we will check the new identity using rational arithmetic.
There are six association types in degree 7 for a skew-symmetric trilinear operation:
  \begin{alignat*}{3}
  &
  (((-,-,-),-,-),-,-),
  &\quad
  &
  ((-,-,(-,-,-)),-,-),
  &\quad
  &
  (-,-,((-,-,-),-,-)),
  \\
  &
  (-,-,(-,-,(-,-,-))),
  &\quad
  &
  ((-,-,-),(-,-,-),-),
  &\quad
  &
  ((-,-,-),-,(-,-,-)),
  \end{alignat*}
with respectively $2520 + 1260 + 1260 + 630 + 630 + 1260 = 7560$ multilinear monomials.
Fill-and-reduce stabilizes at rank 2793, indicating a nullspace of dimension 4767.

We have to exclude the consequences of the ternary derivation identity.  Given a
multilinear ternary polynomial $t(a,b,c,d,e)$ of degree 5, we have the following eight generators 
for the $S_7$-module of multilinear consequences of $t$:
  \begin{alignat*}{4}
  &
  t( (a,f,g), b, c, d, e ),
  &\;\;
  &
  t( a, (b,f,g), c, d, e ),
  &\;\;
  &
  t( a, b, (c,f,g), d, e ),
  &\;\;
  &
  t( a, b, c, (d,f,g), e ),
  \\
  &
  t( a, b, c, d, (e,f,g) ),
  &\;\;
  &
  ( t( a, b, c, d, e ), f, g ),
  &\;\;
  &
  ( f, t( a, b, c, d, e ), g ),
  &\;\;
  &
  ( f, g, t( a, b, c, d, e ) ).
  \end{alignat*}
For the ternary derivation identity, these generators produce a subspace of dimension 4410, 
indicating a complementary subspace of 
dimension $4767 - 4410 = 357$ consisting of new identities for the trilinear operation.

We compute the canonical basis of the nullspace of the matrix obtained from the fill-and-reduce process.
We sort these 4767 vectors by increasing number of nonzero components;
that is, by increasing number of terms in the corresponding polynomial identities.
The first 3016 vectors lie in the 4410-dimensional subspace generated by the consequences of the ternary 
derivation identity.
Vector 3017 is the coefficient vector of the identity in Table \ref{degree7ternary}; it increases the
dimension of the subspace to 4452.
Modulo 101, the nonzero components of this vector are 
$1, 2, 4, 49, 50, 51, 52, 97, 99, 100$;
multiplying by 2 and reducing modulo 101 using symmetric representatives, we obtain 
$2, 4, 8, -3, -1, 1, 3, -8, -4, -2$.
We use these integer coefficients to check the validity of the identity: we generate 7 pseudorandom 
7-dimensional vectors with single-digit components, set the variables $a,b,c,d,e,f,g$ equal to these
vectors, evaluate the identity using integer arithmetic, and obtain the zero vector.  After performing
1000 independent tests, we are convinced that the identity in Table \ref{degree7ternary} is valid over a
field of characteristic 0.
\end{proof}

\begin{problem}
Find a minimal set of $S_7$-module generators for the space of multilinear polynomial identities
of degree 7 satisfied by the operation $(x,y,z)$. 
\end{problem}


\section{Polynomial identities relating the two operations} \label{sectionmixed}

We now consider polynomial identities which involve both operations.
To obtain the correct identities, the scalar factors in the morphisms $\alpha$ and $\beta$ must be compatible.  
We therefore replace $\beta$ by $-30 \beta$; that is, we replace $(x,y,z)$ by $-30(x,y,z)$.
This does not affect the previous results: polynomial identities for a single operation
do not change if the operation is replaced by a (nonzero) scalar multiple.

\begin{lemma} \label{mixeddegree3}
Every polynomial identity of degree $\le 3$ relating the bilinear operation $[x,y] = \alpha( x \wedge y)$
and the trilinear operation $(x,y,z) = \beta( x \wedge y ) \cdot z$ on the $\s$-module $V(6)$ is a consequence 
of the anticommutativity of $[x,y]$, the skew-symmetry of $(x,y,z)$, and the mixed Jacobi identity in degree 3
from Definition \ref{definitionLY}:
  \[
  [[a,b],c] + [[b,c],a] + [[c,a],b] + (a,b,c) + (b,c,a) + (c,a,b)
  \equiv 0.
  \]
\end{lemma}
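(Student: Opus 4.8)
The plan is to follow the same computational template used in Lemma \ref{degree4lemma} and Proposition \ref{ternarydegree5}, now applied to the combined binary-ternary structure in low degree. First I would enumerate the multilinear monomials. In degree $1$ there is a single monomial $a$; in degree $2$ only $[a,b]$, which is anticommutative, so the only identity is $[a,b]+[b,a]\equiv 0$, trivially checked. The real content is degree $3$: here there are two association types, the binary-binary type $[[-,-],-]$ and the ternary type $(-,-,-)$. With anticommutativity of $[-,-]$ imposed, the type $[[-,-],-]$ contributes $3$ multilinear monomials ($[[a,b],c]$, $[[b,c],a]$, $[[c,a],b]$, the remaining permutations being $\pm$ these), and with skew-symmetry in the first two slots imposed, $(-,-,-)$ contributes $3$ monomials ($(a,b,c)$, $(b,c,a)$, $(c,a,b)$); this matches the ``$6$'' in the monomial-count table for degree $3$.

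Next I would run fill-and-reduce on the $6$-dimensional space $P_3\oplus Q_3$: build a matrix with a $6\times 6$ upper block and a $7\times 6$ lower block (since $\dim V(6)=7$), repeatedly plug single-digit pseudorandom $7$-vectors into $a,b,c$, evaluate all $6$ monomials using the structure constants of Tables \ref{alphatable} and \ref{trilineartable} (with $\beta$ rescaled by $-30$), store the resulting columns, and reduce. The expectation is that the stable rank is $5$, so the nullspace is $1$-dimensional, and the single nullspace vector is the coefficient vector of the mixed Jacobi identity $[[a,b],c]+[[b,c],a]+[[c,a],b]+(a,b,c)+(b,c,a)+(c,a,b)$. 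To confirm it really is this identity (and that nothing else hides in the nullspace), I would check that this identity is \emph{satisfied} by the structure — either directly from the tables or by noting it is a consequence of the Lie-Yamaguti construction via $\mathfrak{g}=\mathfrak{m}\oplus\mathfrak{h}$ with $\mathfrak{g}\cong\mathfrak{o}(5,\mathbb{C})$, $\mathfrak{m}\cong V(6)$, $\mathfrak{h}\cong V(2)$ — and that it is nonzero as an element of $P_3\oplus Q_3$, hence spans the $1$-dimensional nullspace.

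The only genuine subtlety — the step most likely to trip one up rather than being truly hard — is the scalar normalization: the morphisms $\alpha$ and $\beta$ are each defined only up to a nonzero scalar, and the mixed Jacobi identity holds on the nose only for the correct relative scaling. This is exactly why $\beta$ is replaced by $-30\beta$; I would verify that with this choice the projections $\pi_{\mathfrak m}[a,b]=[a,b]$ and $\pi_{\mathfrak h}[a,b]$ (whose action recovers $(a,b,c)$) are induced from a single bracket on $V(2)\oplus V(6)\cong\mathfrak{o}(5,\mathbb{C})$, so that the Jacobi identity of that Lie algebra, projected to $\mathfrak m$, gives precisely the stated degree-$3$ identity. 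Everything else — the degree $\le 2$ cases and the claim that every degree-$3$ identity is a consequence of the three listed relations — follows from the rank computation, since a $5$-dimensional row space leaving a $1$-dimensional nullspace means the new identity is unique up to scalar and the $S_3$-orbit of the mixed Jacobi identity together with the defining symmetries already accounts for all of it.
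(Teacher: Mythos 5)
Your proposal matches the paper's proof essentially step for step: enumerate the two degree-3 association types with their six monomials, run fill-and-reduce on the $6$-dimensional monomial space, observe the stable rank $5$, and identify the $1$-dimensional nullspace with the coefficient vector of the mixed Jacobi identity (with the $-30$ rescaling of $\beta$ being exactly what makes this work). Your added sanity check via the reductive decomposition $\mathfrak{o}(5,\mathbb{C}) \cong V(2) \oplus V(6)$ is a reasonable supplement but does not change the argument, which is the same as the paper's.
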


\begin{proof}
There are two association types in degree 3, $[[-,-],-]$ and $(-,-,-)$, and each has three multilinear monomials,
all of which appear in the mixed Jacobi identity.
Fill-and-reduce stabilizes at rank 5, with this row canonical form:
  \[
  \left[
  \begin{array}{rrrrrr}
  1 & . & . & . & . & -1 \\
  . & 1 & . & . & . &  1 \\
  . & . & 1 & . & . & -1 \\
  . & . & . & 1 & . & -1 \\
  . & . & . & . & 1 &  1
  \end{array}
  \right]
  \]
The canonical basis vector of the nullspace is the coefficient vector for the mixed Jacobi identity.
(This explains why rescaled the trilinear operation.)
\end{proof}

\begin{proposition} \label{mixeddegree4}
Every polynomial identity of degree $\le 4$ relating the bilinear operation $[x,y] = \alpha( x \wedge y)$
and the trilinear operation $(x,y,z) = \beta( x \wedge y ) \cdot z$ on the $\s$-module $V(6)$ is a consequence 
of the anticommutativity of $[x,y]$, the skew-symmetry of $(x,y,z)$, the mixed Jacobi identity in degree 3,
the Malcev identity in degree 4, and these two identities in degree 4 from Definition \ref{definitionLY}:
  \begin{align*}
  &
  ( [a,b], c, d ) + ( [b,c], a, d ) + ( [c,a], b, d )
  \equiv 0,
  \\
  &
  [ (a,b,c), d ] - [ (a,b,d), c ] - (a,b,[c,d])
  \equiv 0.
  \end{align*}
\end{proposition}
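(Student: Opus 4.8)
The plan is to argue exactly as in the proofs of Lemma \ref{degree4lemma} and Proposition \ref{degree5proposition}, now carrying both operations through the machinery of Section \ref{computational}. First I would set up the multilinear monomial basis in degree 4. For a binary operation that is anticommutative and a ternary operation that is skew in its first two arguments, there are five association types: the two binary types $[[[-,-],-],-]$ and $[[-,-],[-,-]]$, and the three mixed types $[(-,-,-),-]$, $([-,-],-,-)$, and $(-,-,[-,-])$, with $12 + 3 + 12 + 12 + 6 = 45$ multilinear monomials in normal form. The span of these 45 monomials is an $S_4$-module, and anticommutativity of $[x,y]$ and skew-symmetry of $(x,y,z)$ are already encoded in the choice of normal-form basis, so those two defining identities require no separate treatment.

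Next I would run fill-and-reduce on a matrix with a $45 \times 45$ upper block and a $7 \times 45$ lower block. At each iteration I generate four pseudorandom vectors of dimension 7, assign them to $a,b,c,d$, evaluate each of the 45 monomials using the structure constants of Tables \ref{alphatable} and \ref{trilineartable}—with $\beta$ replaced by $-30\beta$ so that the scalar normalizations of the two operations are compatible, as in Lemma \ref{mixeddegree3}—store the resulting $7 \times 1$ columns in the lower block, and take the row canonical form. Iterating until the rank stabilizes, the dimension of the nullspace is the dimension of the space of all multilinear polynomial identities of degree 4 satisfied by the pair $([x,y],(x,y,z))$. As usual this can be done over $\mathbb{Q}$ or over $\mathbb{F}_p$ for any prime $p > 4$, with identical ranks.

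It then remains to show that the claimed identities generate this nullspace as an $S_4$-module. The only input of degree $< 4$ with nontrivial consequences in degree 4 is the mixed Jacobi identity of degree 3, which produces four degree-4 consequences via its binary liftings (substitute $[\,\cdot\,,x_4]$ for each of its three variables, and bracket the whole polynomial with $x_4$). Together with the Malcev identity of degree 4 from equation \eqref{m-identity} and the two degree-4 identities from Definition \ref{definitionLY}, this gives seven polynomials; I would apply all 24 permutations of $\{a,b,c,d\}$, assemble the coefficient vectors as the rows of a matrix, and compute its rank. Each of these seven polynomials is a genuine identity for our structure—the mixed Jacobi by Lemma \ref{mixeddegree3}, the two Lie-Yamaguti identities by the construction recalled after Definition \ref{definitionLY}, and the Malcev identity by Lemma \ref{degree4lemma}—so the module they generate is contained in the nullspace; hence it suffices to verify that its dimension coincides with the nullspace dimension found by fill-and-reduce.

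None of these steps involves a real obstacle: the computations are of the same kind already carried out in Sections \ref{sectionbinary} and \ref{sectionternary}, only with a larger monomial basis mixing the two operations. The points demanding care are purely bookkeeping: the correct enumeration and ordering of the 45 mixed normal-form monomials, the correct formation of the four binary liftings of the mixed degree-3 Jacobi identity, and the rescaling $\beta \mapsto -30\beta$ that makes the mixed Jacobi identity hold exactly. The one substantive conclusion to confirm is the negative one—that the two matched dimensions leave no room for a new identity in degree 4—in contrast with degree 5, where the same procedure does turn up the new 31-term identity.
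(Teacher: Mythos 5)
Your proposal is correct and follows essentially the same route as the paper: the same 45 mixed normal-form monomials in the five association types, fill-and-reduce with the rescaled $-30\beta$, the same four binary liftings of the mixed Jacobi identity, and a comparison of the $S_4$-module generated by the known identities (dimension 24 in the paper: 10 from the Jacobi liftings plus 14 from the Malcev and two Lie-Yamaguti identities) with the nullspace. The only cosmetic difference is direction: the paper extracts the three degree-4 generators from the sorted nullspace basis via the module-generators algorithm, whereas you verify that the candidate identities already span the nullspace, which amounts to the same computation.
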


\begin{proof}
In degree 4, there are five association types involving both operations,
  \[
  [ [ [ -, - ], - ], - ], \quad 
  [ ( -, -, - ), - ], \quad 
  [ [ -, - ], [ -, - ] ], \quad 
  ( [ -, - ], -, - ), \quad 
  ( -, -, [ -, - ] ), 
  \]
with $12 + 12 + 3 + 12 + 6 = 45$ multilinear monomials.
Fill-and-reduce stabilizes at rank 21 indicating a nullspace of dimension 24.
We compute the canonical basis of the nullspace and sort the vectors by increasing number of nonzero components.

The mixed Jacobi identity $j(a,b,c)$ has these four consequences in degree 4:
  \[
  j( [a,d], b, c ), \quad
  j( a, [b,d], c ), \quad
  j( a, b, [c,d] ), \quad
  [ j(a,b,c), d ].
  \]
Further calculations show that these identities generate a 10-dimensional subspace of the nullspace from 
the fill-and-reduce process. 
Hence there exists a complementary subspace of dimension $24 - 10 = 14$ consisting of new identities in degree 4.
Processing the sorted list of nullspace basis vectors, we find that three vectors increase the rank;
these are the coefficient vectors of the Malcev identity and the two identities
in degree 4 from Definition \ref{definitionLY}.
\end{proof}

\begin{theorem} \label{mixeddegree5}
Every polynomial identity of degree $\le 5$ relating the bilinear operation $[x,y] = \alpha( x \wedge y)$
and the trilinear operation $(x,y,z) = \beta( x \wedge y ) \cdot z$ on the $\s$-module $V(6)$ is a consequence 
of the following identities:
  \begin{enumerate}
  \item
  the anticommutativity of $[x,y]$, 
  \item
  the skew-symmetry of $(x,y,z)$, 
  \item
  the mixed Jacobi identity in degree 3 (Definition \ref{definitionLY}),
  \item
  the Malcev identity in degree 4 (Definition \ref{definitionMalcev}), 
  \item
  the two mixed identities in degree 4 (Definition \ref{definitionLY}), 
  \item
  the 18-term identity in degre 5 (Definition \ref{18termidentity}), 
  \item
  the ternary derivation identity in degree 5 (Definition \ref{definitionLY}),
  \end{enumerate}
and the new 31-term mixed identity in degree 5 displayed in Table \ref{newmixeddegree5}.
\end{theorem}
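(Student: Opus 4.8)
The plan is to follow the computational strategy already used in Lemma \ref{degree4lemma}, Proposition \ref{degree5proposition}, and Proposition \ref{mixeddegree4}: first apply fill-and-reduce to determine the full space of multilinear identities of degree $5$ in the two operations, then apply the module generators algorithm to separate the consequences of known identities from the genuinely new ones. From the table of types and monomials in Section \ref{computational}, degree $5$ has $3$ binary, $2$ ternary, and $8$ mixed association types, with $q = 510$ multilinear monomials in all. We create the fill-and-reduce matrix with a $510 \times 510$ upper block and a $7 \times 510$ lower block, and run it using modular arithmetic with $p = 101$ (legitimate since $101 > 5$), assigning pseudorandom $7$-dimensional vectors to $a,b,c,d,e$ and evaluating monomials with the structure constants of Table \ref{alphatable} for $[x,y]$ and Table \ref{trilineartable} (rescaled by $-30$, as explained before the theorem) for $(x,y,z)$. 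Iterating until the rank stabilizes yields the dimension of the space of all degree-$5$ identities and the canonical basis of its nullspace.

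Next I would assemble the subspace of consequences of the previously established identities. This requires all liftings into degree $5$ of: anticommutativity of $[x,y]$ and skew-symmetry of $(x,y,z)$ (already built into the normal-form monomials); the mixed Jacobi identity $j(a,b,c)$ of degree $3$, lifted through degree $4$ as in Proposition \ref{mixeddegree4} and then each of those lifted to degree $5$, using both the extra internal products and the binary and ternary external products available in the binary-ternary setting; the Malcev identity $m(a,b,c,d)$ of equation \eqref{m-identity}, lifted once to degree $5$; the two mixed degree-$4$ identities of Definition \ref{definitionLY}, each lifted once; and finally the degree-$5$ identities themselves, namely the $18$-term identity $k(a,b,c,d,e)$ of Definition \ref{18termidentity} and the ternary derivation identity, together with all their $S_5$-permutations. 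Running the module generators algorithm on this list fills a matrix $G$ and records its rank $r$; the dimension of the nullspace of the fill-and-reduce matrix, minus $r$, is the dimension of the space of new degree-$5$ identities.

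Then, without resetting $G$, I would feed in the canonical nullspace basis produced by fill-and-reduce, sorted by increasing number of nonzero entries so that sparse identities come first, and record which vectors raise the rank. I expect exactly one new module generator to appear; cleaning it up with the Hermite normal form together with LLL lattice-basis reduction, as in Proposition \ref{degree5proposition}, should produce the $31$-term identity of Table \ref{newmixeddegree5}. Finally I would confirm that items (1)--(7) of the theorem together with this single new identity span the entire nullspace, and independently verify the new identity over $\mathbb{Q}$ by evaluating it on many pseudorandom $7$-dimensional inputs.

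The main obstacle is bookkeeping rather than mathematics: correctly enumerating the mixed association types and, above all, the complete set of liftings across two degrees, since the degree-$3$ mixed Jacobi identity must be pushed through degree $4$ and then to degree $5$, and at each stage the binary and ternary external products multiply the number of generators that must be fed to the module generators algorithm. Omitting any such consequence would make the claimed $31$-term identity spuriously appear new, so exhaustiveness of the lower-degree list is the delicate point. A secondary, purely practical concern is the size of the $510$-column matrices, which is why modular arithmetic is used; by the discussion in Section \ref{computational}, the choice $p = 101$ does not affect the ranks and the conclusion transfers to characteristic $0$.
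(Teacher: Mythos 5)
Your proposal is correct and follows essentially the same route as the paper: fill-and-reduce over the 510 mixed monomials of degree 5, then span the consequences of the lower-degree identities (including the ternary lift of the mixed Jacobi identity directly to degree 5 and the binary lifts through degree 4) plus the 18-term and ternary derivation identities, and finally detect a single additional module generator from the sorted nullspace basis, which is the 31-term identity of Table \ref{newmixeddegree5}. The only differences are implementation details (the paper processes ten fill-and-reduce iterations at a time, uses the integer Hermite normal form to extract a lattice basis of the 387-dimensional nullspace sorted by Euclidean norm, and finds the dimensions 341, 367, 387 at the successive stages), none of which affect the argument.
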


\begin{proof}
In degree 5, there are 13 association types involving both operations,
  \begin{alignat*}{4}
  &[ [ [ [ -, - ], - ], - ], - ], &\quad
  &[ [ ( -, -, - ), - ], - ], &\quad 
  &[ [ [ -, - ], [ -, - ] ], - ], &\quad 
  &[ ( [ -, - ], -, - ), - ], 
  \\
  &[ ( -, -, [ -, - ] ), - ], &\quad 
  &[ [ [ -, - ], - ], [ -, - ] ], &\quad 
  &[ ( -, -, - ), [ -, - ] ], &\quad 
  &( [ [ -, - ], - ], -, - ), 
  \\
  &( ( -, -, - ), -, - ), &\quad 
  &( [ -, - ], [ -, - ], - ), &\quad 
  &( [ -, - ], -, [ -, - ] ), &\quad 
  &( -, -, [ [ -, - ], - ] ), 
  \\
  &[ -, -, [ -, -, - ] ], 
  \end{alignat*}
with $60 + 60 + 15 + 60 + 30 + 30 + 30 + 60 + 60 + 15 + 30 + 30 + 30 = 510$ multilinear monomials.  
We create a matrix of size $580 \times 510$ 
with a $510 \times 510$ upper block and a $70 \times 510$ lower block; 
this allows us to perform 10 iterations of fill-and-reduce simultaneously.  
Fill-and-reduce using the Hermite normal form achieves rank 123 after the 
second group of 10 iterations; the rank does not increase for another 10 groups of 10.
Let $A$ denote the resulting $123 \times 510$ integer matrix; the nullspace has dimension $510 - 123 = 387$.
The Maple command,
  \[
  \texttt{U := HermiteForm(Transpose(A),output='U',method='integer[reduced]'):}
  \]
computes a $510 \times 510$ integer matrix $U$ for which $U A^t = H$, where $H$ is the Hermite normal form
of $A^t$.  The last 387 rows of $U$ form a lattice basis for the integer nullspace of $A$.  We sort 
these rows by increasing Euclidean norm, and store them in a $387 \times 510$ integer matrix $N$.

We next determine the consequences in degree 5 of the known polynomial identities of degrees $\le 4$.
We must consider the following cases:
  \begin{itemize}
  \item
In degree 3 we have the mixed Jacobi identity; 
we use the trilinear operation to lift it directly to degree 5 (obtaining 5 identities),
and we use the bilinear operation to lift it first to degree 4 (obtaining 4 identities) 
and then to degree 5 (obtaining 20 identities).
  \item
In degree 4 we have the Malcev identity and the two mixed identities;
we use the bilinear operation to lift them to degree 5 (obtaining 15 identities).
  \end{itemize}
We have 40 multilinear identities in degree 5 which generate the $S_5$-module
of all consequences of known identities from lower degrees.  
We construct a matrix of size $630 \times 510$ with a $510 \times 510$ upper block and a $120 \times 510$
lower block.  For each of the 40 lifted identities, we fill the rows of the lower block with the 
coefficient vectors of all permutations of the identity and compute the row canonical form using
modular arithmetic.  When we are done, the rank is 341; this is the dimension of the 
$S_5$-module of consequences of known identities from lower degrees.  

The third stage is to process the known identities of degree 5: 
the 18-term identity and the ternary derivation identity.
The rank increases to 367:
there is a complementary subspace of dimension $387 - 367 = 20$ of new identities in degree 5.

The fourth stage is to apply the procedure of the second stage to the identities corresponding to the rows 
of the matrix $N$ obtained from the first stage.
Starting from the matrix of rank 367, we find that exactly one row of the $N$ increases in the rank:
row 269 increases the rank to 387, the dimension of the nullspace.
This row represents the multilinear polynomial identity of Table \ref{newmixeddegree5}.
\end{proof}

  \begin{table}
  \begin{align*}
  &
  - [ \, [ \, [ \, [ \, a, b \, ], e \, ], d \, ], c \, ] 
  - [ \, [ \, [ \, [ \, a, c \, ], b \, ], d \, ], e \, ] 
  - [ \, [ \, [ \, [ \, a, c \, ], e \, ], d \, ], b \, ] 
  + [ \, [ \, [ \, [ \, a, d \, ], c \, ], b \, ], e \, ]
  \\
  & 
  + [ \, [ \, [ \, [ \, a, e \, ], d \, ], b \, ], c \, ] 
  + [ \, [ \, [ \, [ \, a, e \, ], d \, ], c \, ], b \, ] 
  + [ \, [ \, [ \, [ \, b, c \, ], d \, ], a \, ], e \, ] 
  - [ \, [ \, [ \, [ \, b, d \, ], a \, ], c \, ], e \, ] 
  \\
  & 
  - [ \, [ \, [ \, [ \, b, d \, ], a \, ], e \, ], c \, ] 
  - [ \, [ \, [ \, [ \, c, d \, ], a \, ], e \, ], b \, ] 
  - [ \, [ \, [ \, [ \, d, e \, ], b \, ], a \, ], c \, ] 
  - [ \, [ \, [ \, [ \, d, e \, ], c \, ], a \, ], b \, ] 
  \\
  & 
  - [ \, [ \, ( \, a, c, d \, ), b \, ], e \, ] 
  - [ \, [ \, ( \, a, c, d \, ), e \, ], b \, ] 
  - [ \, [ \, ( \, a, d, c \, ), e \, ], b \, ] 
  + [ \, [ \, ( \, b, c, d \, ), e \, ], a \, ] 
  \\
  & 
  - [ \, [ \, ( \, b, d, c \, ), a \, ], e \, ] 
  - [ \, [ \, ( \, c, e, d \, ), a \, ], b \, ] 
  - [ \, [ \, ( \, d, e, c \, ), a \, ], b \, ] 
  - [ \, [ \, ( \, d, e, c \, ), b \, ], a \, ] 
  \\
  & 
  + [ \, [ \, [ \, a, b \, ], [ \, c, d \, ] \, ], e \, ] 
  + [ \, [ \, [ \, a, d \, ], [ \, b, e \, ] \, ], c \, ] 
  + [ \, [ \, [ \, a, d \, ], [ \, c, e \, ] \, ], b \, ] 
  \\
  & 
  + [ \, ( \, a, d, c \, ), [ \, b, e \, ] \, ] 
  - [ \, ( \, b, c, d \, ), [ \, a, e \, ] \, ] 
  - [ \, ( \, b, d, c \, ), [ \, a, e \, ] \, ] 
  - [ \, ( \, c, e, d \, ), [ \, a, b \, ] \, ] 
  \\
  & 
  + ( \, [ \, [ \, a, b \, ], e \, ], c, d \, )
  + ( \, [ \, [ \, a, e \, ], b \, ], c, d \, ) 
  + ( \, [ \, [ \, a, e \, ], b \, ], d, c \, ) 
  + ( \, [ \, [ \, b, e \, ], a \, ], d, c \, )
  \end{align*}
  \caption{The new identity in degree 5 relating the two operations}
  \label{newmixeddegree5}
  \end{table}

\begin{theorem}
Every polynomial identity of degree $\le 6$ relating the bilinear operation $[x,y] = \alpha( x \wedge y)$
and the trilinear operation $(x,y,z) = \beta( x \wedge y ) \cdot z$ on the $\s$-module $V(6)$ is a consequence 
of the identities listed in Theorem \ref{mixeddegree5} together with a 14-dimensional space of new identities
in degree 6.
\end{theorem}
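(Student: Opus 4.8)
The plan is to follow the same three-stage computational strategy used in the proofs of Proposition~\ref{mixeddegree4} and Theorem~\ref{mixeddegree5}, but now in degree~6. First I would enumerate the association types and multilinear monomials: since 6 is even there are no ternary-only types in this degree, so we have the 6 anticommutative binary types together with the 32 mixed types, giving 38 types and 7245 multilinear monomials in all. I would then run the fill-and-reduce algorithm on $V(6)$ equipped with the bilinear operation $\alpha(x\wedge y)$ and the rescaled trilinear operation $-30\,\beta(x\wedge y)\cdot z$, using modular arithmetic (say $p=101$) and the Hermite normal form to keep the matrix entries integral. After enough pseudorandom iterations the rank stabilizes; subtracting it from 7245 gives the dimension of the full space of multilinear identities of degree~6 satisfied by the two operations, and the last rows of the accompanying transformation matrix give a sorted lattice basis $N$ for the integer nullspace.

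Second, I would compute the dimension of the subspace of these identities that are consequences of the known identities of degree $\le 5$ listed in Theorem~\ref{mixeddegree5}. This requires lifting each known identity to degree~6. A ternary bracket raises the degree by~2, so only the binary operation can carry a degree-5 identity up to degree~6; hence the 18-term identity, the ternary derivation identity, and the new 31-term mixed identity each contribute their five internal binary liftings and one external binary lifting. The lower-degree identities (anticommutativity, skew-symmetry, the mixed Jacobi identity in degree~3, and the Malcev identity together with the two mixed identities in degree~4) must be lifted through every admissible sequence of binary and ternary brackets reaching degree~6 --- exactly as in the degree-5 proof but with one further lifting step. For each lifted identity I would fill the lower block of an auxiliary matrix with the coefficient vectors of all $6!=720$ permutations, row reduce, and record the rank; the stabilized rank is the dimension of the $S_6$-module of consequences of the known identities.

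Third, starting from that matrix, I would adjoin the rows of $N$ one at a time (in order of increasing number of nonzero entries, so that the simplest identities appear first) and watch the rank increase. The claim is that the rank rises by exactly~14, so that there is a 14-dimensional complement of genuinely new identities in degree~6; applying the module-generators algorithm to these extra rows would, if desired, produce an explicit set of $S_6$-module generators, which I would then re-verify by evaluating on pseudorandom 7-dimensional vectors using integer (rather than modular) arithmetic, as was done for the degree-7 ternary identity.

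The main obstacle is purely one of scale: with 7245 monomials the fill-and-reduce and consequence matrices have thousands of rows and columns, and the consequence stage must process several dozen lifted identities times $720$ permutations each, so careful use of modular arithmetic, sparse storage, and staged row reduction is essential --- this is exactly why the paper notes that the mixed case is feasible only up to degree~6. A secondary difficulty is the bookkeeping for the liftings: in the mixed setting a degree-4 identity has far more degree-6 descendants (through nested binary brackets, through a single ternary bracket, or through a ternary bracket built on top of a binary one) than in the pure binary case, and one must include every admissible lifting pattern, since omitting one would underestimate the "old consequences" subspace and spuriously inflate the count of new identities.
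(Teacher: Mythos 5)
Your proposal is correct and follows essentially the same route as the paper's proof: fill-and-reduce over the 38 association types and 7245 multilinear monomials, then span the ``old'' subspace by lifting the degree-3 identity through all admissible binary/ternary lifting patterns, the degree-4 identities through two binary liftings or one ternary lifting, and the degree-5 identities through a single binary lifting, and finally compare dimensions to exhibit the 14-dimensional complement of new identities. The paper simply records the computed numbers (fill-and-reduce rank 751, nullspace dimension 6494, and 300 liftings spanning a 6480-dimensional subspace), which your outline would reproduce.
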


\begin{proof}
There are 38 association types and 7245 multilinear monomials.
Fill-and-reduce stabilizes at rank 751, indicating a nullspace of dimension 6494.
In this case we have a large number of liftings of identities from lower degrees:
  \begin{enumerate}
  \item
The mixed Jacobi identity in degree 3 can be lifted to degree 6 using either three binary liftings,
a binary lifting followed by a ternary lifting, or a ternary lifting followed by a binary lifting.
  \item
The Malcev identity and the two Lie-Yamaguti identities of degree 4 can be lifted to degree 6 using
either two binary liftings, or one ternary lifting.
  \item  
The 18-term identity, the ternary derivation identity, and the 31-term identity of degree 5 can be
lifted to degree 6 using one binary lifting.  
  \end{enumerate}
The resulting 300 liftings generate a subspace of dimension 6480, indicating a complementary
14-dimensional subspace of new identities.
The computational details are similar to those of previous proofs, and are therefore omitted.
\end{proof}

\begin{problem}
Find a minimal set of $S_6$-module generators for the space of multilinear polynomial identities
of degree 6 relating $[x,y]$ and $(x,y,z)$. 
\end{problem}

\begin{problem}
Use the representation theory of the symmetric group as in Bremner and Hentzel \cite{BremnerHentzel}
to attempt to extend these computations to degree 7.
\end{problem}


\section*{Acknowledgements}

Murray Bremner was partially supported by NSERC.
Andrew Douglas was partially supported by PSC/CUNY.
We thank Ivan Shestakov for telling us about the Filippov $h$-polynomial and Conjecture \ref{filippovconjecture}.


\end{document}